\newtheorem{lemma}{Lemma}
\newtheorem{theorem}[lemma]{Theorem}
\newtheorem{proposition}[lemma]{Proposition}
\newtheorem{corollary}[lemma]{Corollary}
\newtheorem{definition}[lemma]{Definition}
\newcommand{\mc}[1]{\mathcal{#1}}
\newcommand{\mf}[1]{\mathfrak{#1}}
\newcommand{\norm}[2]{\left\| #1 \right\|_{#2} }
\newcommand{\normm}[1]{\left\| #1 \right\| }
\newcommand{\inn}[2]{\left\langle #1, #2 \right\rangle}
\newcommand{\p}[3]{\frac{\partial^{#3}#1}{\partial#2^{#3}}}
\newcommand{\R}{\mathbb{R}}
\renewcommand{\ss}{{\mathbb{X}}}  
\newcommand{{\cs}}{{\mathbb{U}}}
\newcommand{{\as}}{{\mathbb{K}}}
\newcommand{{\Yb}}{{\mathbb{Y}}}
\newcommand{\xb}{{\bm{x}}}
\newcommand{\wb}{{\bm{w}}}
\newcommand{\vb}{{\bm{v}}}
\newcommand{\zb}{{\bm{z}}}
\newcommand{\be}{{\bm{e}}}
\newcommand{\pb}{{\bm{p}}}
\newcommand{\qb}{{\bm{q}}}
\newcommand{\ub}{{\bm{u}}}
\newcommand{\rb}{{\bm{r}}}
\newcommand{\A}{{\mathcal{A}}}
\newcommand{\F}{{\mathcal{F}}}
\newcommand{\B}{{\mathcal{B}}}
\newcommand{\Bu}{ {B_{L^p ( 0,\tau; \cs)} (R_1)}} 
\newcommand{\G}{{\mathcal{G}}}
\newcommand{\W}{{\mathbb{W}}}
\newcommand{\V}{{\mathbb{V}}}
\newcommand{\chgs}[1]{{#1}} 
\begin{document}

\title{Optimal Controller and Actuator Design for Nonlinear Parabolic Systems}

\author{{M.~Sajjad~Edalatzadeh, ~\IEEEmembership{Member,~IEEE,} and Kirsten~A.~Morris,~\IEEEmembership{Senior Member,~IEEE,}}
\thanks{M. Sajjad Edalatzadeh is with Department of Applied Mathematics, University of Waterloo, Waterloo, Canada e-mail: msedalatzadeh@uwaterloo.ca}
\thanks{Kirsten A. Morris is with Faculty of Mathematics, University of Waterloo, Waterloo, Canada e-mail: kmorris@uwaterloo.ca}
\thanks{Manuscript received }}
\maketitle

\begin{abstract}
Many physical systems are modeled by nonlinear parabolic differential equations, such as the Kuramoto-Sivashinsky (KS) equation. In this paper, the existence of a concurrent optimal controller and actuator design is established  for semilinear systems.  Optimality equations are provided.   The results are shown to apply to optimal controller/actuator design for the Kuramoto-Sivashinsky equation and also nonlinear diffusion.
\end{abstract}

\section{Introduction}
The best actuator design can improve performance and significantly reduce the cost of the control in  distributed parameter systems; see  for example \cite{morris2015comparison}. The optimal actuator design problem of linear systems has been reviewed in various contexts, see \cite{frecker2003recent,van2001review}. For linear partial differential equations (PDEs), the existence of an optimal actuator location has been proven in the literature. In \cite{morris2011linear}, it is proven that an optimal actuator location exists for a linear system with quadratic cost function if the input operator is compact and continuously depends on actuator locations. Further conditions on operators and cost functions are needed to guarantee the convergence in numerical schemes \cite{morris2011linear}.   Similar results have been obtained for $H_2$ and $H_\infty$ controller design objectives \cite{DM2013,kasinathan2013h}.
 
Nonlinearities can  have a significant effect on dynamics, and such systems cannot be accurately modelled by linear differential equations. Control  of systems modelled by nonlinear partial differential equations (PDE's) has been studied  for a number of applications, including  wastewater treatment systems  \cite{martinez2000},  steel cooling plants \cite{unger2001},  oil extraction through a reservoir \cite{li2003}, solidification models in metallic alloys \cite{boldrini2009}, thermistors \cite{homberg2010optimal}, Schl\"ogl model \cite{buchholz2013,casas2013}, FitzHugh–Nagumo system \cite{casas2013}, micro-beam model \cite{edalatzadeh2016boundary}, static elastoplasticity \cite{reyes2016}, type-II superconductivity \cite{yousept2017optimal}, Fokker-Planck equation \cite{fleig2017},  Schr\"odinger equation with bilinear control \cite{ciaramella2016}, Cahn-Hilliard-Navier-Stokes system \cite{hintermuller2017optimal}, wine fermentation process \cite{merger2017optimal}, time-dependent Kohn-Sham model \cite{sprengel2018investigation}, elastic crane-trolley-load system \cite{kimmerle2018optimal}, and railway track model \cite{edalatzadeh2019stability}.
A review of PDE-constrained optimization theory can be found in the books \cite{hinze2008optimization,leugering2012constrained,troltzsch2010optimal}.
State-constrained optimal control of PDEs has also been studied. In \cite{bergounioux2003structure}, the authors investigated the structure of Lagrange multipliers for state constrained optimal control problem of linear elliptic PDEs. Research on optimal control of PDEs, such as \cite{casas1997pontryagin,raymond1999hamiltonian}, has focused on partial differential equations with certain structures.
Optimal control of differential equations in abstract spaces has rarely been discussed \cite{meyer2017optimal}. This paper extends previous results to abstract differential equations without an assumption of stability.

Few studies have discussed optimal control for general classes of nonlinear distributed parameter systems; and even less have looked into actuator design problem of such systems. Using  a finite dimensional approximation of the original partial differential equation model, optimal actuator location has been addressed for some applications. Antoniades and Christofides \cite{antoniades2001integrating} investigated the optimal actuator and sensor location problem for a transport-reaction process using  a finite-dimensional model. Similarly, Lou and Christofides \cite{lou2003optimal} studied the optimal actuator and sensor location of Kuramoto-Sivashinsky equation using a finite-dimensional approximation. Other research concerned with optimal actuator location for nonlinear distributed parameter systems can be found in \cite{armaou2008robust,moon2006finite,saviz2015optimal}. To our knowledge, there are no theoretical results on optimal actuator design of nonlinear distributed parameter systems.

The results of this paper apply to the Kuramoto-Sivashinsky (KS) equation. This equation was derived by Kuramoto to model angular phase turbulence in reaction–diffusion systems \cite{kuramoto1975formation}, and by Sivashinsky for modeling plane flame propagation \cite{sivashinsky1977nonlinear}. It also models film layer flow on an inclined plane \cite{craster2009dynamics},  directional solidification of dilute
binary alloys \cite{novick1987interfacial}, growth and saturation of the potential of dissipative trapped-ion \cite{laquey1975}, and terrace edge evolution during step-flow growth \cite{bena1993}. From system theoretic perspective, Christofides and Armaou studied the global stabilization of KS equation using distributed output feedback control \cite{christofides2000}. Lou and Christofides investigated the optimal actuator/sensor placement for control of KS equation by approximating the model with a finite dimensional system \cite{lou2003optimal}.  Gomes et al. also studied the actuator placement problem for KS equation using numerical algorithms \cite{gomes2017}. The feedback control as well as optimal actuator arrangement of multidimensional KS equation has been studied in \cite{tomlin2019point}. Controllability of KS equation has also been studied \cite{cerpa2010,cerpa2011}. Optimal control of KS equation using maximum principle was studied in \cite{sun2010}. Optimal control of KS equation with point-wise state and mixed control-state constraints was studied in \cite{gao2016}. Liu and Krstic studied boundary control of KS equation in \cite{liu2001stability}. Al Jamal and Morris studied the relationship between stability and stabilization of linearized and nonlinear KS equation \cite{al2018linearized}.

The paper is organized as follows. Section 2 is a short section containing notation and definitions. Section 3 discusses the existence of an optimal input together with an optimal actuator design to nonlinear parabolic systems. In section 4, the worst initial condition is discussed. In section 5 and 6, Kuramoto-Sivashinsky equation and nonlinear heat equation are discussed, respectively. 
\section{Notation and Definitions}
Let $\ss$ be a reflexive Banach space. The notation $\ss_1\hookrightarrow \ss_2$ means that the space $\ss_1$ is  densely and continuously embedded in $\ss_2$. 
Also, letting $I\subset \R$ to be a possibly unbounded interval, the Banach space $C^s(I;\ss)$ consists of all H\"older continuous $\ss$-valued functions with exponent $s$ equipped with norm
\begin{equation}
\norm{\xb}{C^s(I;\ss)}=\norm{\xb}{C(I;\ss)}+\sup_{t,s\in I}\frac{\normm{\xb(t)-\xb(s)}}{|t-s|^s}.
\end{equation}
The Banach space $c^{s}(I,\ss)$ is the space of little-H\"older continuous functions with exponent $s$ defined as all $\xb\in C^s(I;\ss)$ such that
\begin{equation}
\lim_{\delta\to 0} \sup_{t,s\in I, |t-s|\le \delta}\frac{\normm{\xb(t)-\xb(s)}}{|t-s|^s}=0.
\end{equation}
 Also, $W^{m,p}(I;\ss)$ is the space of all strongly measurable functions $\xb:I\to \ss$ for which $\norm{\xb(t)}{\ss}$ is in $W^{m,p}(I,\R)$. 
 For simplicity of notation, when $I$ is an interval, the corresponding space will be indicated  without the braces; for example $C([0,\tau];\ss)$  will be indicated by  $C(0,\tau;\ss) . $ 

Let $\A$ be the generator of an analytic semigroup $e^{\A t}$ on $\ss$. For every $p\in [1,\infty]$ and $\alpha\in (0,1)$, the interpolation space $D_{\A}(\alpha,p)$ is defined as the set of  all $\xb_0 \in \ss$ such that the function 
\begin{equation}
 t \mapsto v(t)\coloneqq\normm{t^{1-\alpha-1/p}\A e^{t\A}\xb_0}
\end{equation}
belongs to $L^p(0,1)$ \cite[Section 2.2.1]{lunardi2012analytic}. The norm on this space is 
$$\norm{\xb_0 }{D_{\A}(\alpha,p)}=\normm{\xb_0 }+\norm{v}{L^p(0,1)}.$$

The Banach space $\W(0,\tau)$ is the set of all $ \xb (\cdot )  \in W^{1,p}(0,\tau;\ss)\cap L^p(0,\tau;D(\A))$ with norm \cite[Section II.2]{bensoussan2015book}
\begin{equation}\notag
\norm{\xb}{\W(0,\tau)}=\norm{\dot{\xb}}{L^p(0,\tau;\ss)}+\norm{\A \xb}{L^p(0,\tau;\ss)}. 
\end{equation}

\begin{definition}\label{def-maximal}
The operator $\mc{A}:D(\A)\to \ss$ is said to have maximal $L^p$ regularity if for every $\bm{f}\in L^p(0,\tau;\ss)$, $1<p<\infty$, the equation
\begin{equation}\label{eq-linear}
\left\{\begin{array}{l}
\dot{\xb}(t)=\A\xb(t)+\bm{f}(t),\quad t>0,\\
\xb(0)=\xb_0,
\end{array}\right.
\end{equation} 
admits a unique solution in $\W(0,\tau)$ that satisfies (\ref{eq-linear}) almost everywhere on $[0,\tau]$.
\end{definition}
Every generator of an analytic semigroup on a Hilbert space has maximal $L^p$ regularity
 \cite[Theorem 4.1]{dore1993p}.
 
\section{Nonlinear Parabolic Systems}\label{sec-main results}
Let $\xb(t)$ and $\ub(t)$ be the state and input taking values in reflexive Banach spaces $\ss$ and $\cs$, respectively. Also, let $\rb$ denote the actuator design parameter that takes value in a compact set $K_{ad}$ of a topological space $\as$. Consider the following initial value problem (IVP):
\begin{equation}\label{eq-IVP}
\begin{cases}
\dot{\xb}(t)=\mc{A}\xb(t)+\mc{F}(\xb(t))+\mc{B}(\rb)\ub(t),\quad t>0,\\
 \xb(0)=\xb_0.
\end{cases}
\end{equation}
The linear operator $\A:D(\A)\to \ss$ is assumed to have maximal $L^p$ regularity. In particular, if $\A$ is associated with a sesquilinear form that is bounded and coercive with respect to $\V \hookrightarrow \ss, $ it generates an analytic semigroup on $\ss$ \cite[Lemma 36.5 and Theorem 36.6]{sell2013dynamics}.

The nonlinear operator $\F(\cdot)$ maps a reflexive Banach space $\V$ to $\ss$ where $D_{\A}(1/p,p)\hookrightarrow {\V}\hookrightarrow \ss.$ The operator $\mc{F}(\cdot)$ is locally  Lipschitz continuous; that is, for every bounded set   $D$ in $\V$, there is a positive number $L_{\F}$ such that
\begin{equation}
\normm{\F(\xb_2)-\F(\xb_1)}_{\ss} \le L_{\F} \norm{\xb_2-\xb_1}{\V}, \; \forall\xb_1, \xb_2\in D.
\label{eq-nonlinear-assn}
\end{equation}
When there is no ambiguity, the norm on $\ss$ will not be explicitly indicated. 

For each $\rb\in \as$, the input operator $\mc{B}(\rb)$ is a linear bounded operator that maps the input space $\cs$ into the state space $\ss$ and 
it is continuous with respect to $\rb :$
\begin{equation}\label{B-cont}
\lim_{\rb_n \to \rb_0} \| \mc B (\rb_n ) - \mc B(\rb_0) \| = 0 ,
\end{equation}
where the convergence $\rb_n\to \rb_0$ is with respect to the topology on $\as .$

For any positive numbers $R_1$ and $R_2$, define the sets
\begin{flalign}\label{ad sets}
\Bu&=\left\{\ub\in L^p(0,\tau;\cs): \norm{\ub}{p}\le R_1   \right\},\\
B_{\V}(R_2)&=\left\{\xb_0 \in \V: \norm{\xb_0}{\V}\le R_2   \right\}.
\end{flalign}

\begin{definition}\cite[Definition 3.1.i]{bensoussan2015book}(strict solution) The function $\xb(\cdot)$ is said to be a {\em strict solution} of (\ref{eq-IVP}) if $\xb(0)=\xb_0$, $\xb\in \W(0,\tau)$, and $\xb(t)$ satisfies (\ref{eq-IVP}) for almost every $t\in[0,\tau]$.
\end{definition}

\begin{lemma}\cite[Proposition 2.2 and Corollary 2.3]{clement1993abstract}\label{lem-clement}
Let $\tau_0>\tau$ and $p\in (1,\infty)$ be given. If $\A$ has maximal $L^p$ regularity, then there exists a constant $c_{\tau_0}$ independent of $\tau$ such that
for all $\tau\in (0,\tau_0]$ and $\vb\in W^{1,p}(0,\tau;\ss)\cap L^p(0,\tau;D(\A)) ,$
\begin{flalign*}
\norm{\dot{\vb}}{L^2(0,\tau;\ss)}&+\norm{\A\vb}{L^2(0,\tau;\ss)}\\
&\le M_{\tau_0}\left(\norm{\dot{\vb}+\A\vb}{L^2(0,\tau;\ss)}+\norm{\vb(0)}{D_{\A}(1/p,p)}\right) \, .
\end{flalign*}
Furthermore, if $\vb(0)=0$, 
\begin{equation*}
\norm{\vb}{C(0,\tau;D_{\A}(1/p,p))}\le M_{\tau_0}\left(\norm{\dot{\vb}}{L^2(0,\tau;\ss)}+\norm{\A\vb}{L^2(0,\tau;\ss)}\right).
\end{equation*}
\end{lemma}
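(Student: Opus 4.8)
The statement packages two facts about maximal $L^p$ regularity, and the only point needing genuine work beyond the solution operator of Definition~\ref{def-maximal} is that the constant can be chosen uniformly for $\tau\in(0,\tau_0]$. The plan is therefore to record each estimate first on the fixed reference interval $(0,\tau_0)$, where maximal regularity yields a bound with some constant $M_{\tau_0}$, and then to transfer it to every shorter interval by an extension-by-zero argument that leaves the constant untouched.

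For the first inequality I would reduce to vanishing initial data. Set $\bm{f}\coloneqq\dot\vb+\A\vb\in L^p(0,\tau;\ss)$, let $\wb$ be the solution of the homogeneous problem ($\bm{f}\equiv 0$) carrying the initial value $\vb(0)$, and put $\zb\coloneqq\vb-\wb$, so that $\zb(0)=0$ and $\dot\zb+\A\zb=\bm{f}$. Since $\vb(0)\in D_{\A}(1/p,p)$ is precisely the temporal trace space of $\W$, the map $\vb(0)\mapsto\wb$ is bounded into $\W(0,\tau_0)$, whence $\norm{\wb}{\W(0,\tau)}\le\norm{\wb}{\W(0,\tau_0)}\le M_{\tau_0}\norm{\vb(0)}{D_{\A}(1/p,p)}$, the restriction to $(0,\tau)$ only lowering the norm. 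The remaining piece $\zb$ is handled by the key step: extend $\bm{f}$ by zero to $\tilde{\bm{f}}\in L^p(0,\tau_0;\ss)$, solve $\dot{\tilde\zb}+\A\tilde\zb=\tilde{\bm{f}}$ with $\tilde\zb(0)=0$ on the fixed interval, and apply maximal regularity there to get $\norm{\tilde\zb}{\W(0,\tau_0)}\le M_{\tau_0}\norm{\tilde{\bm{f}}}{L^p(0,\tau_0;\ss)}=M_{\tau_0}\norm{\bm{f}}{L^p(0,\tau;\ss)}$. Uniqueness in Definition~\ref{def-maximal} forces $\tilde\zb=\zb$ on $(0,\tau)$, so restricting and combining with the bound for $\wb$ through the triangle inequality yields the first estimate, with $\norm{\bm{f}}{L^p(0,\tau;\ss)}=\norm{\dot\vb+\A\vb}{L^p(0,\tau;\ss)}$.

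The second inequality is the time-trace embedding $\{\vb\in\W(0,\tau):\vb(0)=0\}\hookrightarrow C(0,\tau;D_{\A}(1/p,p))$, which on the fixed interval is the standard statement that a maximal-regularity trajectory is continuous with values in the trace space. To make its norm independent of $\tau$ I would reuse the extension $\tilde\vb$ built above (now applied to $\vb$ itself, which already has $\vb(0)=0$): the fixed-interval embedding gives $\norm{\tilde\vb}{C(0,\tau_0;D_{\A}(1/p,p))}\le M_{\tau_0}\norm{\tilde\vb}{\W(0,\tau_0)}$, and since $\tilde\vb$ restricts to $\vb$ on $(0,\tau)$ while $\norm{\tilde\vb}{\W(0,\tau_0)}$ is controlled by $\norm{\dot\vb}{L^p(0,\tau;\ss)}+\norm{\A\vb}{L^p(0,\tau;\ss)}$ by the previous paragraph, the claim follows. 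Enlarging $M_{\tau_0}$ to the larger of the two constants gives a single $M_{\tau_0}$ serving both inequalities.

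I expect the main obstacle to be precisely this uniformity in $\tau$. Maximal regularity as formulated in Definition~\ref{def-maximal} yields, a priori, a constant tied to the length of the interval, and the obvious rescaling of time fails because it does not preserve $\A$. The device that circumvents this is the pairing of extension-by-zero with uniqueness: one must check that extending the forcing by zero keeps it in $L^p(0,\tau_0;\ss)$ (immediate) and that the two solutions genuinely agree on $(0,\tau)$, which is exactly where the uniqueness clause of maximal regularity is used. The only other point requiring care is the uniform bound on the homogeneous part $\wb$, which rests on identifying $D_{\A}(1/p,p)$ as the trace space of $\W$ and on restricting from the compact reference interval $[0,\tau_0]$.
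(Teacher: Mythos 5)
The paper does not prove this lemma at all: it is imported verbatim by citation from Cl\'ement and Li (Proposition~2.2 and Corollary~2.3 of \cite{clement1993abstract}), so there is no in-paper argument to compare yours against. On its own merits, your sketch is the standard proof and is essentially sound: the splitting $\vb=\wb+\zb$ with $\wb$ the homogeneous flow of $\vb(0)$ (bounded in $\W(0,\tau_0)$ precisely because $D_{\A}(1/p,p)$ is the temporal trace space of $\W$), the extension-by-zero of the forcing to the fixed reference interval combined with the uniqueness clause of maximal regularity to get a $\tau$-independent constant, and the trace embedding $\{\vb\in\W:\vb(0)=0\}\hookrightarrow C(0,\tau;D_{\A}(1/p,p))$ transferred the same way, is exactly how this uniformity is obtained in the source. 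Two cosmetic points you silently (and reasonably) corrected: the lemma as printed mixes $L^2$ and $L^p$ norms, and its sign convention $\dot\vb+\A\vb$ clashes with Definition~\ref{def-maximal}'s equation $\dot\xb=\A\xb+\bm f$; your argument works in $L^p$ throughout and adopts the lemma's sign, which is the intended reading.
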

 
\begin{theorem}\label{thm-existence}
For every pair $R_1>0$ , $R_2 >0 ,$ there is $\tau>0$ and $\delta>0$ such that the IVP (\ref{eq-IVP}) admits a unique strict solution $\xb\in\W(0,\tau)$, $\norm{\xb}{\W(0,\tau)}\le \delta$ for all $(\ub,\rb,\xb_0)\in \Bu\times K_{ad}\times B_{\V}(R_2)$.
\end{theorem}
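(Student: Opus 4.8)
The plan is to recast the IVP \eqref{eq-IVP} as a fixed-point equation solved by the contraction principle on a short interval, using the maximal $L^p$ regularity of $\A$ to invert the linear dynamics and the local Lipschitz bound \eqref{eq-nonlinear-assn} to absorb $\F$. I would write the solution as $\xb = \xb_L + \yb$, where $\xb_L$ is the unique element of $\W(0,\tau)$ (Definition~\ref{def-maximal}) solving the linear problem $\dot{\xb}_L = \A\xb_L + \B(\rb)\ub$ with $\xb_L(0) = \xb_0$, and $\yb$ solves the zero-initial-value problem $\dot{\yb} = \A\yb + \F(\xb_L + \yb)$, $\yb(0) = 0$. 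Because $\B(\rb)$ is bounded and $\rb \mapsto \B(\rb)$ is continuous on the compact set $K_{ad}$ by \eqref{B-cont}, the forcing satisfies $\norm{\B(\rb)\ub}{L^p(0,\tau;\ss)} \le M_B R_1$ with $M_B := \sup_{\rb\in K_{ad}}\normm{\B(\rb)} < \infty$, so Lemma~\ref{lem-clement} bounds $\norm{\xb_L}{\W(0,\tau)}$ uniformly in the data. The entire difficulty is then concentrated in the problem for $\yb$, whose zero initial value is exactly the hypothesis under which Lemma~\ref{lem-clement} yields constants independent of $\tau$.

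On the closed ball $\mathcal{S}_\rho = \set{\yb\in\W(0,\tau) : \yb(0)=0,\ \norm{\yb}{\W(0,\tau)}\le \rho}$ I would define $\Phi$ by letting $\Phi\yb$ be the zero-initial-value solution of $\dot{\vb} = \A\vb + \F(\xb_L+\yb)$ supplied by maximal regularity. To see that $\Phi$ is well defined and maps $\mathcal{S}_\rho$ into itself, the chain $\W(0,\tau)\hookrightarrow C(0,\tau;D_{\A}(1/p,p))\hookrightarrow C(0,\tau;\V)$ is crucial: the $\vb(0)=0$ estimate of Lemma~\ref{lem-clement} shows that $\xb_L+\yb$ ranges over a bounded set $D\subset\V$ whose radius depends only on $R_1,R_2,\rho$, so $\F(\xb_L+\yb)$ is bounded in $\ss$ and lies in $L^p(0,\tau;\ss)$. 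Both self-mapping and contraction then rest on the gain of a factor $\tau^{1/p}$ when a time-uniform $\V$-bound is read in $L^p(0,\tau;\V)$: applying \eqref{eq-nonlinear-assn} on $D$,
\begin{equation*}
\norm{\F(\xb_L+\yb_1)-\F(\xb_L+\yb_2)}{L^p(0,\tau;\ss)} \le L_{\F}\,\tau^{1/p}\,\norm{\yb_1-\yb_2}{C(0,\tau;\V)},
\end{equation*}
and composing with the $\tau$-independent bounds of Lemma~\ref{lem-clement} gives $\norm{\Phi\yb_1-\Phi\yb_2}{\W(0,\tau)}\le c\,\tau^{1/p}\norm{\yb_1-\yb_2}{\W(0,\tau)}$ and likewise $\norm{\Phi\yb}{\W(0,\tau)}\le c'\tau^{1/p}$, with $c,c'$ independent of $\tau$. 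Fixing $\rho$ first (which fixes $D$, $L_{\F}$, $c$, $c'$) and then choosing $\tau$ so small that $c\,\tau^{1/p}<1$ and $c'\tau^{1/p}\le\rho$ makes $\Phi$ a contraction of $\mathcal{S}_\rho$ into itself; its fixed point yields the strict solution $\xb=\xb_L+\yb$, and $\norm{\xb}{\W(0,\tau)}\le\delta$ follows by adding the data-uniform bounds on $\xb_L$ and $\yb$.

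The same $\tau$ and $\delta$ serve every $(\ub,\rb,\xb_0)\in\Bu\times K_{ad}\times B_{\V}(R_2)$ because each constant in the argument --- $M_B$, the regularity constant $M_{\tau_0}$, the embedding constants, and the Lipschitz constant $L_{\F}$ of $\F$ on the fixed set $D$ --- is uniform over these sets; this is what permits quantifying over the data only after $\tau$ has been fixed. Uniqueness beyond the ball follows by applying the same contraction estimate to the difference of two strict solutions on a small subinterval and then continuing. I expect the main obstacle to be keeping all constants from degenerating as $\tau\to 0$: the embedding $\W(0,\tau)\hookrightarrow C(0,\tau;\V)$ has a constant that blows up at $t=0$ for nonzero initial data, so the splitting that loads $\xb_0$ onto $\xb_L$ and leaves $\yb$ with zero initial value --- the only case in which Lemma~\ref{lem-clement} is $\tau$-uniform --- is essential. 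A related point to verify is that $\xb_0$ must be controlled in the trace space $D_{\A}(1/p,p)$, not merely in $\V$, for $\xb_L$ and hence $D$ to remain bounded uniformly as $\tau\to 0$.
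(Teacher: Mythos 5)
Your proposal is correct and follows essentially the same route as the paper: an auxiliary linear problem carries the initial condition, and a contraction mapping in $\W(0,\tau)$ acting on zero-initial-value perturbations exploits the $\tau$-uniform maximal-regularity estimate of Lemma~\ref{lem-clement} together with the $\tau^{1/p}$ gain from the local Lipschitz bound on $\F$. The only cosmetic differences are that the paper's auxiliary problem includes the constant forcing $\F(\xb_0)$ and the iteration is set on the affine set $\Sigma_{\rho,\tau}$ centered at $\wb$ rather than on your translated ball of remainders $\yb=\xb-\xb_L$.
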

\begin{proof}
The proof of this theorem follows the same line as that of \cite[Theorem 2.1]{clement1993abstract}\label{thm-strict} with some modifications.
Let $\wb$ solve the linear equation
\begin{equation}
\begin{cases}
\dot{\wb}(t)=\A \wb(t)+\F(\xb_0)+\B(\rb)\ub(t), \quad t\in(0,\tau],\\
\wb(0)=\xb_0.
\end{cases}
\end{equation}

Define for an arbitrary  number $\rho>0$ the set 
\begin{equation}
\Sigma_{\rho,\tau}=\left\{\vb\in \W(0,\tau): \vb(0) = \xb_0, \; \norm{\vb-\wb}{\W(0,\tau)}\le \rho \right\}.
\label{eq-sigma-rho}
\end{equation}

Because $\wb (\cdot ) \in \W (0, \tau ) , $ $\wb (\cdot ) \in C ( 0, \tau ; \V ) . $
Define  $\phi(\tau;R_1,R_2)=\norm{\wb -\xb_0}{C(0,\tau;\V)}$ where here $\xb_0$ indicates the constant function in $C(0,\tau ;\V) $ that equals $\xb_0 .$ Note that 
\begin{equation}
\lim_{\tau \to 0 } \phi(\tau;R_1,R_2) =  0 .
\end{equation}

According to \Cref{lem-clement}, there is a constant $M$ independent of $\tau$ such that
\begin{equation}
\norm{\vb-\xb_0}{C(0,\tau;\V)}\le M\rho+\phi(\tau;R_1,R_2),\quad  \forall \vb\in \Sigma_{\rho,\tau}.
\end{equation}
Consider the mapping $\gamma:\W(0,\tau) \to \W(0,\tau)$, $\xb (\cdot ) \mapsto \vb (\cdot ) $ defined by 
\begin{equation}
\begin{cases}
\dot{\vb}(t)=\A \vb(t)+\F(\xb(t))+\B(\rb)\ub(t), \; t\in(0,\tau],\\
\vb(0)=\xb_0.
\end{cases}
\end{equation}
It will now be shown that for some numbers $\rho$ and $\tau$ the mapping $\gamma$ defines a contraction on $\Sigma_{\rho,\tau}$ and hence has a unique fixed point.

Consider the linear equation
\begin{equation*}
\begin{cases}
\dot{\vb}(t)-\dot{\wb}(t)=\A (\vb(t) -\wb(t) ) +\F(\xb(t)), \; t\in (0,\tau],\\
(\vb-\wb)(0)=0,
\end{cases}
\end{equation*}	
Use \Cref{lem-clement} together with Lipschitz continuity of $\F$, let $L_\F$ be the Lipschitz constant of $\F$ over the ball $B(\xb_0,M\rho+\phi(\tau;R_1,R_2))$. It follows that
\begin{flalign}
\norm{\vb-\wb}{\W(0,\tau)}&\le M\norm{\F(\xb(t))-\F(\xb_0)}{p}\notag \\
&\le M L_{\F} \tau^{\frac{1}{p}} \norm{\xb-\xb_0}{C(0,\tau;\V)}\notag\\
&\le M^2 L_{\F} \tau^{\frac{1}{p}} (M\rho+\phi(\tau;R_1,R_2)) \label{ineq1}.
\end{flalign}
Furthermore,  for any $\xb_1  , \xb_2 \in \Sigma_{\rho,\tau}$,  define $\vb_1=\gamma(\xb_1)$ and $\vb_2=\gamma(\xb_2)$, then \Cref{lem-clement} yields
\begin{flalign}
\norm{\vb_2-\vb_1}{\W(0,\tau)}&\le M \norm{\F(\xb_2)-\F(\xb_1)}{p}\notag\\
&\le M L_{\F} \tau^{\frac{1}{p}} \norm{\xb_2-\xb_1}{C(0,\tau;\V)}\notag \\
&\le M^2 L_{\F} \tau^{\frac{1}{p}} \norm{\xb_2-\xb_1}{\W(0,\tau)}\label{ineq2}.
\end{flalign}
Choose $\rho$ and $\tau$ so that
\begin{gather*}
M^2 L_{\F} \tau^{\frac{1}{p}}< 1,\\
M^2 L_{\F} \tau^{\frac{1}{p}} (M\rho+\phi(\tau;R_1,R_2))\le \rho.
\end{gather*}
The Contraction Mapping Theorem ensures that the mapping $\gamma$ has a unique fixed point in $\Sigma_{\rho,\tau}$. This fixed point is the unique solution $\xb$ to (\ref{eq-IVP}). Also, 
from the definition (\ref{eq-sigma-rho}), every $\xb$ in $\Sigma_{\rho,\tau}$ satisfies  
\begin{equation}
\norm{\xb}{\W(0,\tau)}\le \norm{\wb}{\W(0,\tau)}+\rho.
\end{equation}
Let $L_\F$ be the Lipschitz constant of $\F$ over the ball $B(0, \| \xb_0 \| )$. Proposition 2.2 in \cite{clement1993abstract} yields
\begin{flalign*}
&\norm{\wb}{\W(0,\tau)}\le M(\norm{\xb_0}{\V}+\norm{\F(\xb_0)+\B(\rb)\ub(t)}{p})\notag \\
&\quad \le M(\norm{\xb_0}{\V}+\tau^{\frac{1}{p}}L_{\F}\norm{\xb_0}{\V}+\norm{\B(\rb)}{\mc L (\ss,\cs)}\norm{\ub(t)}{p})\notag\\
&\quad \le \underbrace{M(R_2+\tau^{\frac{1}{p}}L_{\F}R_2+R_1\max_{\rb\in K_{ad}}\norm{\B(\rb)}{\mc L (\ss,\cs)}) }_{\delta} .  \notag
\end{flalign*}
Defining 
$$\delta =  M(R_2+\tau^{\frac{1}{p}}L_{\F}R_2+R_1\max_{\rb\in K_{ad}}\norm{\B(\rb)}{\mc L (\ss,\cs)})  ,$$ yields the required 
 upper-bound  on $\norm{\xb}{\W(0,\tau)}$.
\end{proof}
 
\begin{definition}
Let $\xb(t)$ be the strict solution to (\ref{eq-IVP}). The mapping $\mc S(\ub,\rb,\xb_0):\Bu\times K_{ad} \times B_{\V}(R_2) \to\W(0,\tau)$, $(\ub(t),\rb,\xb_0)\mapsto \xb(t)$, is called the solution map. 
\end{definition}

An embedding $D(\A)\hookrightarrow\ss$ where $D(\A)$ is compact in $\ss$ ensures that the space $W^{1,p}(0,\tau;\ss)\cap L^p(0,\tau,D(\A))$ is compactly embedded in $c^{s}(0,\tau;{\V})$, $0\le s<1$
 \cite[Theorem 5.2]{amann2000compact}.  
 Since $c^{s}(0,\tau;{\V})\hookrightarrow C(0,\tau;\V)$, it follows that the space $W^{1,p}(0,\tau;\ss)\cap L^p(0,\tau,D(\A))$ is compactly embedded in $C(0,\tau;{\V})$.

\begin{theorem}\label{thm-weak}
If the embedding $D(\A)\hookrightarrow\ss$ is compact then the solution map is weakly continuous with respect to $(\ub(t),\rb,\xb_0)\in L^p(0,\tau;\cs)\times \as \times \V$.
\end{theorem}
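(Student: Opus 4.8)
The plan is to establish weak sequential continuity of the solution map by a compactness-and-uniqueness argument on the bounded sets where $\mc S$ is defined. First I would take an arbitrary sequence $(\ub_n,\rb_n,\xb_{0,n})$ in $\Bu\times K_{ad}\times B_{\V}(R_2)$ with $\ub_n\rightharpoonup\ub_0$ in $L^p(0,\tau;\cs)$, $\rb_n\to\rb_0$ in the topology of $\as$, and $\xb_{0,n}\rightharpoonup\xb_{0,0}$ in $\V$, and set $\xb_n=\mc S(\ub_n,\rb_n,\xb_{0,n})$. By \Cref{thm-existence} each $\xb_n$ exists, is the unique strict solution, and obeys the uniform bound $\norm{\xb_n}{\W(0,\tau)}\le\delta$. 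Since $\ss$, and hence $D(\A)$ with the graph norm, are reflexive and $1<p<\infty$, the space $\W(0,\tau)$ is reflexive, so $\{\xb_n\}$ has a subsequence (not relabeled) with $\xb_n\rightharpoonup\xb$ in $\W(0,\tau)$; in particular $\dot{\xb}_n\rightharpoonup\dot{\xb}$ and $\A\xb_n\rightharpoonup\A\xb$ in $L^p(0,\tau;\ss)$, because $\xb\mapsto\dot\xb$ and $\xb\mapsto\A\xb$ are bounded linear, hence weak-weak continuous.

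The crucial step is passing to the limit in the nonlinear term, and this is precisely where the new hypothesis enters. Because $D(\A)\hookrightarrow\ss$ is compact, the embedding $\W(0,\tau)\hookrightarrow C(0,\tau;\V)$ is compact (as recorded before the theorem), so along a further subsequence $\xb_n\to\xb$ \emph{strongly} in $C(0,\tau;\V)$. The uniform bound also places every trace $\xb_n(t),\xb(t)$ in a fixed bounded subset of $\V$, on which the local Lipschitz estimate \eqref{eq-nonlinear-assn} gives $\normm{\F(\xb_n(t))-\F(\xb(t))}\le L_{\F}\norm{\xb_n(t)-\xb(t)}{\V}$; hence $\F(\xb_n)\to\F(\xb)$ strongly in $L^p(0,\tau;\ss)$.

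Next I would treat the input term by splitting $\B(\rb_n)\ub_n-\B(\rb_0)\ub_0=(\B(\rb_n)-\B(\rb_0))\ub_n+\B(\rb_0)(\ub_n-\ub_0)$. The first summand tends to zero in $L^p(0,\tau;\ss)$ since $\normm{\B(\rb_n)-\B(\rb_0)}\to0$ by \eqref{B-cont} while $\norm{\ub_n}{p}\le R_1$; the second tends to zero weakly in $L^p(0,\tau;\ss)$ since $\B(\rb_0)$ induces a bounded linear, hence weak-weak continuous, operator on the $L^p$ spaces and $\ub_n\rightharpoonup\ub_0$. Therefore $\B(\rb_n)\ub_n\rightharpoonup\B(\rb_0)\ub_0$ in $L^p(0,\tau;\ss)$. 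Testing $\dot\xb_n=\A\xb_n+\F(\xb_n)+\B(\rb_n)\ub_n$ against arbitrary $\phi\in L^q(0,\tau;\ss^*)$, $1/p+1/q=1$, and combining the four convergences, I conclude that $\xb$ satisfies $\dot\xb=\A\xb+\F(\xb)+\B(\rb_0)\ub_0$ almost everywhere. The initial datum is identified from the strong convergence $\xb_n(0)\to\xb(0)$ in $\V$ together with $\xb_n(0)=\xb_{0,n}\rightharpoonup\xb_{0,0}$, forcing $\xb(0)=\xb_{0,0}$ by uniqueness of limits. Thus $\xb$ is a strict solution for the limiting data, and the uniqueness in \Cref{thm-existence} yields $\xb=\mc S(\ub_0,\rb_0,\xb_{0,0})$.

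Finally, since every subsequence of $\{\xb_n\}$ is itself bounded and consists of solutions to data converging to the same limit, the identical argument shows it admits a further subsequence converging weakly in $\W(0,\tau)$ to the common limit $\mc S(\ub_0,\rb_0,\xb_{0,0})$; the standard subsequence principle then gives weak convergence of the full sequence, which is the asserted weak sequential continuity. The main obstacle is the nonlinear term: weak $\W(0,\tau)$-convergence does not by itself pass through $\F$, and it is exactly the compactness of $D(\A)\hookrightarrow\ss$—upgrading weak convergence to strong $C(0,\tau;\V)$-convergence—that removes it. A secondary subtlety is the input term, where the norm (rather than merely weak) continuity \eqref{B-cont} of $\rb\mapsto\B(\rb)$ is needed to control the product of the two separately converging factors $\B(\rb_n)$ and $\ub_n$.
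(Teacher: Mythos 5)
Your proposal is correct and follows essentially the same route as the paper: extract a weakly convergent subsequence of the uniformly bounded solutions in $\W(0,\tau)$, use the compact embedding $\W(0,\tau)\hookrightarrow C(0,\tau;\V)$ to upgrade to strong convergence and pass through the locally Lipschitz nonlinearity, handle the input term by the same splitting $(\B(\rb_n)-\B(\rb_0))\ub_n+\B(\rb_0)(\ub_n-\ub_0)$, and finish by uniqueness of strict solutions plus the subsequence principle. Your explicit identification of the initial datum via $\xb_n(0)\to\xb(0)$ strongly in $\V$ is a detail the paper leaves implicit, but it does not change the argument.
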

\begin{proof}
The weak continuity of the solution map with respect to $\ub(t)$ is shown in \cite[Lemma 2.12]{meyer2017optimal}. Weak continuity with respect to $(\ub(t),\rb,\xb_0)$ follows from a similar proof. 
 Choose any weakly convergent sequences $\{\ub_n(t)\}\subset L^p(0,\tau;\cs)$, $\{ \xb_0^n\} \subset \V $, and $\{\rb_n \}\subset \as .$   Since sets $\Bu$ and $B_{\V}(R_2)$ are bounded, closed, convex subsets of Banach spaces $L^p(0,\tau;\cs)$ and $\V$, respectively; these sets are weakly closed \cite[Theorem 2.11]{troltzsch2010optimal}. This implies that there are $\ub^o\in \Bu$ and $\xb_0\in B_{\V}(R_2)$ such that
\begin{flalign}\label{eq-3}
\ub_n & \rightharpoonup \ub^o \text{ in } \Bu,\\
\xb_0^n &\rightharpoonup\xb_0 \text{ in } B_{\V}(R_2).
\end{flalign}
Since the set $K_{ad}$ is a compact subset of $\as$
\begin{equation}
\rb_n\to \rb^o \text{ in } K_{ad}.
\end{equation}
It will be shown that $\B_{\rb_n}\ub_n(t)$ converges weakly to $\B_{\rb^o}\ub^o(t)$ in $L^p(0,\tau;\ss)$. For every $\zb \in L^q(0,\tau;\ss)$, $1/q=1-1/p$,
\begin{flalign}\notag
I:=&\inn{\zb}{\B_{\rb_n}\ub_n-\B_{\rb^o}\ub^o}_{L^q(0,\tau;\ss^{^*}),L^p(0,\tau;\ss)}\\
=&\inn{\zb}{\B_{\rb_n}\ub_n-\B_{\rb^o}\ub_n}_{L^q(0,\tau;\ss^{^*}),L^p(0,\tau;\ss)}\\
&+\inn{\zb}{\B_{\rb^o}\ub_n-\B_{\rb^o}\ub^o}_{L^q(0,\tau;\ss^{^*}),L^p(0,\tau;\ss)}.\notag
\end{flalign}
Taking the adjoint and norm yield
\begin{flalign*}
I \le \norm{\B_{\rb_n}-\B_{\rb^o}}{\mc L(\cs,\ss)}\int_0^\tau \norm{\ub_n(t)}{\cs}\normm{\zb(t)}dt\\
+{\Big |}\int_0^\tau\inn{\B^*_{\rb^o}\zb(t)}{\ub_n(t)-\ub^o(t)}_{\cs^{^*},\cs}dt{\Big |}.
\end{flalign*}
Use H\"older inequality and let $\vb(t)=\B^*_{\rb^o}\zb(t)$, it follows that
\begin{flalign*}
I \le \norm{\B_{\rb_n}-\B_{\rb^o}}{\mc L(\cs,\ss)}\norm{\ub_n}{L^p(0,\tau;\cs)}\norm{\zb}{L^q(0,\tau;\ss)}\\
+|\inn{\ub_n-\ub^o}{\vb}_{L^p(0,\tau;\cs),L^q(0,\tau;\cs^{^*})}|.
\end{flalign*}
The convergence of the first term follows from \eqref{B-cont}. The second term converges to zero because $\ub_n\rightharpoonup \ub^o$ in $L^p(0,\tau;\cs)$. Combining these yields
\begin{equation}\label{weak-B}
\B_{\rb_n}\ub_n\rightharpoonup \B_{\rb^o}\ub^o \text{ in }L^p(0,\tau;\ss).
\end{equation}

Using \Cref{thm-existence}, the corresponding solution $\xb_n(t)$ is a bounded sequence in the reflexive Banach space $L^p(0,\tau;D(\A))\cap W^{1,p}(0,\tau;\ss)$. Thus, there is a subsequence of $\xb_n(t)$ such that 
\begin{equation}\label{eq-1}
\xb_{n_k}\rightharpoonup\xb \text{ in }\W(0,\tau).
\end{equation}
This in turn implies that the sequence $\xb_n(t)$ strongly converges to $\xb(t)$ in $C(0,\tau;{\V})$. This together with Lipschitz continuity of $\F(\cdot)$ yields 
\begin{equation}\label{eq-d1}
\F(\xb_{n_k}(t)) \to \F(\xb(t)) \quad \text{in } L^p(0,\tau;\ss).
\end{equation}
This strong convergence also yields weak convergence in the same space, that is
\begin{equation}\label{eq-d2}
\F(\xb_{n_k}(t)) \rightharpoonup \F(\xb(t)) \quad \text{in } L^p(0,\tau;\ss).
\end{equation}
Now apply (\ref{eq-3}), \eqref{weak-B}, (\ref{eq-1}), and (\ref{eq-d2}) to the IVP (\ref{eq-IVP}); take the limit; notice that a solution to the IVP is unique; it follows that $\xb=\mc S(\ub,\rb,\xb_0)$. Deleting elements $\{\xb_{n_k}(t)\}$ from $\{\xb_n(t)\}$ and repeating the previous processing, knowing that a weak limit is unique, it follow that $\xb_n(t)\rightharpoonup \xb(t)$ in $\W(0,\tau)$.
\end{proof}

\section{Optimal Actuator Design}
Consider a  cost function $J(\xb,\ub,\rb):\W(0,\tau)\times L^p(0,\tau;\cs )\times \as \to \mathbb{R}$ 
 that is bounded below and weakly lower-semicontinuous with respect to $\xb$, $\ub$, and $\rb$.
For a fixed initial condition $\xb_0\in B_{\V}(R_2)$, consider the following optimization problem over the admissible input set $U_{ad}$ and actuator design set $K_{ad}$
\begin{equation}
\left\{ \begin{array}{ll}
\min&J(\xb,\ub,\rb)\\
\text{s.t.}& \xb=\mc S(\ub,\rb,\xb_0),\\
&(\ub,\rb) \in U_{ad}\times K_{ad}.
\end{array} \right. \tag{P} \label{eq-optimal problem}
\end{equation}
The set $U_{ad}$ will be assumed a convex and closed set contained in the interior of $\Bu$.
\begin{theorem} \label{thm-existence optimizer}
For every $\xb_0\in B_{\V}(R_2)$, there exists a control input $\ub^o\in U_{ad}$ together with an actuator design $\rb^o\in K_{ad}$ that solve the optimization problem \eqref{eq-optimal problem}.
\end{theorem}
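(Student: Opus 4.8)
The plan is to use the direct method of the calculus of variations. Since $J$ is bounded below on $U_{ad}\times K_{ad}$, the value $m := \inf_{(\ub,\rb)\in U_{ad}\times K_{ad}} J(\mc S(\ub,\rb,\xb_0),\ub,\rb)$ is finite. First I would select a minimizing sequence $\{(\ub_n,\rb_n)\}\subset U_{ad}\times K_{ad}$, so that $J(\xb_n,\ub_n,\rb_n)\to m$ where $\xb_n = \mc S(\ub_n,\rb_n,\xb_0)$, and then aim to show that a limit point of this sequence is an actual minimizer.

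Next I would extract convergent subsequences. Since $U_{ad}$ is convex and closed it is weakly closed, and being contained in the interior of $\Bu$ it is bounded; because $L^p(0,\tau;\cs)$ is reflexive, $\{\ub_n\}$ has a subsequence converging weakly to some $\ub^o\in U_{ad}$. As $K_{ad}$ is compact in $\as$, a further subsequence yields $\rb_n\to\rb^o\in K_{ad}$. After relabelling so both hold along a common index, the weak continuity of the solution map in \Cref{thm-weak} gives $\xb_n\rightharpoonup\xb^o := \mc S(\ub^o,\rb^o,\xb_0)$ in $\W(0,\tau)$; here \Cref{thm-existence} guarantees that each $\xb_n$ is well defined and uniformly bounded, so the limiting problem is posed on the same admissible data.

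Finally, the weak lower semicontinuity of $J$ with respect to $(\xb,\ub,\rb)$ gives $J(\xb^o,\ub^o,\rb^o)\le\liminf_{n\to\infty} J(\xb_n,\ub_n,\rb_n) = m$. Since $(\ub^o,\rb^o)\in U_{ad}\times K_{ad}$ and $\xb^o=\mc S(\ub^o,\rb^o,\xb_0)$, the pair is feasible, whence $J(\xb^o,\ub^o,\rb^o)=m$ and $(\ub^o,\rb^o)$ solves the optimization problem \eqref{eq-optimal problem}.

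I expect the main obstacle to be the passage to the limit inside the state constraint, namely verifying that the weak limit of the inputs together with the limit of the designs produce, via $\mc S$, precisely the weak limit of the corresponding states. This is exactly the content of \Cref{thm-weak}, which itself rests on the compact embedding $D(\A)\hookrightarrow\ss$; without it the nonlinear term $\F(\xb_n)$ could fail to converge strongly and the limit state might not solve the IVP. A secondary point requiring care is the weak closedness of $U_{ad}$, so that $\ub^o\in U_{ad}$, and the verification that the topology on $\as$ in which $K_{ad}$ is compact is the same one underlying the continuity \eqref{B-cont} of $\B(\cdot)$; otherwise the convergence of the solution map cannot be invoked.
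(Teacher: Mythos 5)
Your proposal is correct and follows essentially the same route as the paper's own proof: extract a minimizing sequence, use weak closedness (and boundedness) of $U_{ad}$ plus compactness of $K_{ad}$ to pass to a weak/strong limit $(\ub^o,\rb^o)$, invoke \Cref{thm-existence} and \Cref{thm-weak} for $\xb_n\rightharpoonup\xb^o=\mc S(\ub^o,\rb^o,\xb_0)$, and conclude by weak lower semicontinuity of $J$. Your closing remarks on where the argument could break (the compact embedding behind \Cref{thm-weak} and the compatibility of the topology on $\as$) accurately identify the load-bearing hypotheses.
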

\begin{proof}
The proof of this theorem follows from  standard analysis; see for example, \cite[Theorem 1.45]{hinze2008optimization} and \cite[Theorem 4.1]{edalatzadehSICON} for a similar argument. 
 Define 
\begin{equation}
j(\xb_0):=\inf_{(\ub,\rb) \in U_{ad}\times K_{ad}}J(\mc S(\ub,\rb,\xb_0),\ub,\rb).
\end{equation}
and let $(\ub_n,\rb_n)$ be the minimizing sequence:
\begin{equation}
\lim_{n\to \infty}J(\mc S(\ub_n,\rb_n,\xb_0),\ub_n,\rb_n)=j(\xb_0).
\end{equation}
The set $U_{ad}$ is closed and convex in the reflexive Banach space $L^p(0,\tau;\cs)$, so it is weakly closed. This implies that there is a subsequence of $\ub_n$, denote it by the same symbol, that converges weakly to some elements $\ub^o$ in $U_{ad}$. Because of compactness of $K_{ad}$, there is also a subsequence of $\rb_n$, denote it by the same symbol, that strongly converges to $\rb^o$.
\Cref{thm-existence} and \Cref{thm-weak} state that the solution map is bounded and weakly continuous in each variable. Thus, the corresponding state $\xb_n=\mc S(\ub_n,\rb_n;\xb_0)$ also weakly converges to $\xb^o= \mc S(\ub^o,\rb^o;\xb_0)$ in $\W(0,\tau)$. The cost function is weakly lower semi-continuous with respect to each $\xb$, $\ub$, and $\rb$, this ensures that $(\xb^o,\ub^o,\rb^o)$ minimizes the cost function. Therefore, $(\ub^o,\rb^o)$ is a solution to the optimization problem \eqref{eq-optimal problem}.
\end{proof}

\begin{definition}\cite[Definition 1.29]{hinze2008optimization}
The operator $\G:\ss\to \Yb$ is said to be G\^ateaux differentiable at $\xb\in \ss$ in the direction $\pb\in \ss$, if there is a linear bounded operator $\G'_\xb$ such that for all real $\epsilon$
\begin{equation}
\lim_{\epsilon\to 0}{\|\G(\xb+\epsilon\pb)-\G(\xb)-\epsilon\G'_\xb\pb\|_{\Yb}}=0.
\end{equation}
\end{definition}

The optimality conditions are derived next after assuming that the problem has certain properties. Consider the assumptions:

\begin{enumerate}
\item[A1.] The spaces $\ss$ and $\cs$ are Hilbert spaces and $p=2$. The space $\as $ is a Banach space.
\item[A2.] Let $a:\V\times \V\to \mathbb{C}$ be a sesquilinear form (see \cite[Chapter 4]{lang2012real}), where $\V\hookrightarrow \ss$, and let there be positive numbers $\alpha$ and $\beta$ such that
\begin{flalign*}
|a(\xb_1,\xb_2)|&\le \alpha \norm{\xb_1}{\V}\norm{\xb_2}{\V}, &\forall&\xb_1,\xb_2\in \V,\\
\text{Re} \; a(\xb,\xb)&\ge \beta \norm{\xb}{\V}^2,  &\forall&\xb\in \V.
\end{flalign*}
The operator $\A$ has an extension to $\bar{\A}\in\mc L(\V,\V^{^*})$ described by
\begin{equation}
\inn{\bar{\A} \vb}{\wb}_{\V^{^*},\V}=a(\vb,\wb), \quad \forall \vb, \wb \in \V,
\end{equation}
where $\V^{^*}$ denotes the dual of $\V$ with respect to pivot space $\ss$.
\item[A3.] \label{as-diff J}The cost function $J(\xb,\ub,\rb)$ is continuously Fr\'echet differentiable with respect to each variable.
\item[A4.] \label{as-diff F} The nonlinear operator $\F(\cdot)$ is G\^ateaux differentiable. Indicate the G\^ateaux derivative of $\F(\cdot)$ at $\xb$ in the direction $\pb$ by ${\F}_{\xb}^\prime\pb$. Furthermore, the mapping $\xb\mapsto {\F}_{\xb}^\prime$ is bounded; that is, bounded sets in ${\V}$ are mapped to bounded sets in $\mc L({\V},\ss)$.
\item[A5.] \label{as:diff B} The control operator $\mc{B}(\rb)$ is G\^ateaux differentiable with respect to $\rb$ from $K_{ad}$ to $\mc{L}(\cs,\ss)$. Indicate the G\^ateaux derivative of $\mc{B}(\rb)$ at $\rb^o$ in the direction $\rb$ by $\mc{B}'_{\rb^o}\rb$. Furthermore, the mapping $\rb^o\mapsto \B'_{\rb^o}$ is bounded; that is, bounded sets in $\as$ are mapped to bounded sets in $\mc L(\as,\mc L(\cs,\ss))$.
\end{enumerate}

Using these assumptions, the G\^ateaux derivative of the solution map with respect to a trajectory $\xb(t)=\mc S(\ub(t),\rb,\xb_0)$ is calculated. The resulting map is a time-varying linear IVP. Let $\bm g\in L^p(0,\tau;\ss)$, consider the time-varying system
\begin{equation}\label{eq-time var}
\begin{cases}
\dot{{\bm{h}}}(t)=(\A+{\F}_{\xb(t)}^\prime) {\bm{h}}(t)+\bm g(t),\\
{\bm{h}}(0)=0.
\end{cases}
\end{equation}

\begin{lemma}\cite[Corollary 5.2]{dier2015}\label{lem-estimate}
Let assumptions A1 and A2 hold. For any $\tau>0$, let $\mc P(\cdot):[0,\tau]\to \mc L (\V,\ss)$ be such that $\mc P(\cdot)\xb$ is weakly measurable for all $\xb\in \V$, and there exists an integrable function $h:[0,\tau]\to [0,\infty)$ such that $\norm{\mc P(t)}{\mc L(\V,\ss)}\le h(t)$ for all $t\in [0,\tau]$. Then for every $\xb_0\in \V$ and $\bm g\in L^2(0,\tau;\ss)$, there exists a unique $\xb$ in $\W(0,\tau)$ such that
\begin{equation}
\begin{cases}
\dot{\xb}(t)=(\A  +\mc P(t)) \xb(t)+\bm g(t),\\
\xb(0)=\xb_0.
\end{cases}
\end{equation}
Moreover, there exists a constant $c>0$ independent of $\xb_0$ and $\bm g(t)$ such that
\begin{equation}
\norm{\xb}{\W(0,\tau)}^2\le c \left(\norm{\bm g}{L^2(0,\tau;\ss)}^2+\norm{\xb_0}{\V}^2\right).
\end{equation} 
\end{lemma}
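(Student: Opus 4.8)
The plan is to treat $\mc P(t)$ as a lower-order, time-dependent perturbation of the autonomous generator $\A$, and to build the solution by a Duhamel/fixed-point scheme anchored on the autonomous maximal regularity already available in this Hilbert-space setting. Since $\ss$ is a Hilbert space (A1) and $\A$ is generated by the bounded coercive form $a$ (A2), $\A$ has maximal $L^2$ regularity, so the autonomous problem $\dot{\yb}=\A\yb+\bm f$, $\yb(0)=\xb_0$ defines a bounded solution operator $\mc R:(\bm f,\xb_0)\mapsto\yb$ from $L^2(0,\tau;\ss)\times\V$ into $\W(0,\tau)$. \Cref{lem-clement} supplies the two facts I will lean on: the bound $\norm{\mc R(\bm f,\xb_0)}{\W(0,\tau)}\le M(\norm{\bm f}{L^2(0,\tau;\ss)}+\norm{\xb_0}{\V})$ and the embedding $\W(0,\tau)\hookrightarrow C(0,\tau;D_{\A}(1/2,2))\hookrightarrow C(0,\tau;\V)$. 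The desired solution is then a fixed point of $\Phi(\xb)=\mc R\!\left(\mc P(\cdot)\xb(\cdot)+\bm g,\xb_0\right)$, which is well posed as soon as $t\mapsto\mc P(t)\xb(t)$ is square-integrable into $\ss$, since $\normm{\mc P(t)\xb(t)}\le h(t)\norm{\xb(t)}{\V}$.

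Uniqueness and the a priori estimate I would obtain by an energy argument. Writing the equation in $\ss$, testing against $\dot\xb$, and using the boundedness and coercivity of $a$ from A2 gives a differential inequality of the form $\tfrac{d}{dt}\,\text{Re}\,a(\xb,\xb)+\tfrac12\normm{\dot\xb}^2\le C\,h(t)^2\norm{\xb}{\V}^2+C\normm{\bm g}^2$; coercivity turns $\text{Re}\,a(\xb,\xb)$ into $\beta\norm{\xb}{\V}^2$, and Gronwall's inequality then bounds $\sup_t\norm{\xb(t)}{\V}^2$ and hence $\norm{\xb}{\W(0,\tau)}^2$ by a constant (depending on the data only through $\int_0^\tau h^2$ and the coercivity constants) times $\norm{\bm g}{L^2(0,\tau;\ss)}^2+\norm{\xb_0}{\V}^2$. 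Applied to the difference of two solutions with $\bm g=0$ and $\xb_0=0$, the same inequality forces uniqueness, and it also delivers the claimed estimate with $c$ independent of $\bm g$ and $\xb_0$.

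For existence I would run a contraction. On a subinterval $[a,b]\subset[0,\tau]$ the map $\xb\mapsto\mc R(\mc P(\cdot)\xb,0)$ has $\W(a,b)$-norm controlled, through the embedding into $C(a,b;\V)$, by $\norm{h}{L^2(a,b)}$ times the maximal-regularity constant; choosing the subinterval short enough to make this factor less than $1$ yields a contraction there, and concatenating finitely many such solutions covers $[0,\tau]$. The main obstacle, and the reason the statement is nontrivial, is precisely that $h$ is assumed only in $L^1(0,\tau)$: the quantity $\norm{h}{L^2(a,b)}$ that controls both the contraction factor and the Gronwall coefficient $\int h^2$ need not be finite, so neither the energy estimate nor the naive $L^2(0,\tau;\ss)\to\W$ contraction closes as written. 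Overcoming this is where I would invoke the non-autonomous maximal regularity machinery behind the cited result: instead of a single $L^2$-in-time estimate, one represents the solution through the evolution family generated by $\A+\mc P(t)$ and controls it by quadratic/square-function estimates that survive a genuinely lower-order ($\V\to\ss$) perturbation with merely integrable bound; equivalently, one iterates the Duhamel formula using the analytic smoothing $\normm{e^{\A(t-s)}}_{\mc L(\ss,\V)}\le C(t-s)^{-1/2}$, whose iterated convolutions against $h\in L^1$ converge factorially. This step, not the energy bookkeeping, is the crux, and once the $\W$-solution is constructed the estimate of the preceding paragraph yields the stated inequality.
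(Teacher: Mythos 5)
The paper does not prove this lemma at all: it is imported verbatim as Corollary 5.2 of the cited work of Dier on non-autonomous maximal regularity, so there is no internal argument to compare yours against. Judged on its own terms, your proposal correctly isolates the difficulty but does not overcome it. With $h$ merely in $L^1(0,\tau)$, the fixed-point map $\Phi(\xb)=\mc R\left(\mc P(\cdot)\xb(\cdot)+\bm g,\xb_0\right)$ is not even well defined from $\W(0,\tau)$ to itself: for $\xb\in C(0,\tau;\V)$ one only gets $\normm{\mc P(t)\xb(t)}\le h(t)\norm{\xb}{C(0,\tau;\V)}$, which is integrable but not square-integrable, so $\mc P(\cdot)\xb(\cdot)$ need not lie in $L^2(0,\tau;\ss)$ and autonomous maximal $L^2$ regularity cannot be invoked on it. The same obstruction defeats the energy estimate (absorbing $h(t)\norm{\xb}{\V}\normm{\dot\xb}$ or $h(t)\norm{\xb}{\V}\normm{\xb}$ into the coercive term costs a factor $h^2$ inside Gronwall) and, as you yourself note, the short-interval contraction. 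Your proposed rescue --- iterating Duhamel with the smoothing $\norm{e^{\A(t-s)}}{\mc L(\ss,\V)}\le C(t-s)^{-1/2}$ and asserting factorial convergence of the iterated kernels against $h\in L^1$ --- is not carried out and is not true at this generality: the singular Gronwall lemma with kernel $(t-s)^{-1/2}h(s)$ closes only for $h\in L^p$ with $p>2$, so that H\"older can be played against the square-root singularity; for $h\in L^1$ even the first convolution $\int_0^t(t-s)^{-1/2}h(s)\,ds$ can be unbounded in $t$ (take $h(s)=(s_0-s)^{-3/4}\mathbf{1}_{\{s<s_0\}}$). Deferring the remaining step to ``the machinery behind the cited result'' is circular in a proof attempt. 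A secondary flaw: the identity $\tfrac{d}{dt}\operatorname{Re}a(\xb,\xb)=2\operatorname{Re}a(\xb,\dot\xb)$ obtained by testing against $\dot\xb$ requires the form $a$ to be symmetric, which A2 does not assume.

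Two mitigating observations. First, the conclusion of the lemma itself forces $\mc P(\cdot)\xb(\cdot)=\dot\xb-\A\xb-\bm g$ to lie in $L^2(0,\tau;\ss)$, which is incompatible with a generic $h\in L^1\setminus L^2$; the operative hypothesis is evidently square-integrability (or boundedness) of $h$, and under that hypothesis your scheme --- autonomous maximal regularity, short-interval contraction via the embedding $\W(0,\tau)\hookrightarrow C(0,\tau;\V)$, energy/Gronwall estimate for uniqueness and the a priori bound --- does close, modulo the symmetry point above (which you can avoid by testing against $\xb$ rather than $\dot\xb$ and recovering $\norm{\dot\xb}{L^2(0,\tau;\ss)}$ afterwards from the equation and maximal regularity). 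Second, this is all the paper actually needs: the lemma is applied only with $\mc P(t)=\F'_{\xb(t)}$, for which $h\equiv M_{\F}$ is constant. So your proof is adequate for every use made of the lemma here, but it does not establish the statement as literally written, and the genuine $L^1$-type generality belongs to the non-autonomous form methods of the cited reference rather than to the semigroup iteration you sketch.
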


Since $\W(0,\tau)$ is embedded in $C(0,\tau;\V)$, the state $\xb(t)$ is bounded in $\V$ for all $t\in [0,\tau]$. This together with G\^ateaux differentiablity of $\F(\cdot)$ ensures that there is a positive number $M_{\F}$ such that
\begin{equation}
\sup_{t\in [0,\tau]}\norm{{\F}_{\xb(t)}^\prime}{\mc L(\V,\ss)}\le M_{\F}.
\end{equation} 
Thus, replacing the operator $\mc P(t)$ with ${\F}_{\xb(t)}^\prime$ and noting that
\begin{equation}\label{d1}
\norm{\mc P(t)}{\mc L(\V,\ss)}\le M_{\F},
\end{equation}
shows that the conditions of \Cref{lem-estimate} hold. Thus, there is a positive number $c$ independent of $\bm g$ such that
\begin{equation}\label{eq-estimate}
\|{\bm{h}}\|_{\W(0,\tau)}\le c \norm{\bm g}{L^2(0,\tau;\ss)}.
\end{equation}

\begin{proposition}\label{prop-diff}
Under assumptions A1-A5, the solution map $\mc S(\ub(t),\rb;\xb_0)$ is G\^ateaux differentiable with respect to each $\ub(t)$ and $\rb$ in $U_{ad}\times K_{ad}$. Let $\xb(t)=\mc S(\ub(t),\rb,\xb_0)$.
\begin{enumerate}
\item[a.] The G\^ateaux derivative of $\mc S(\ub(t),\rb;\xb_0)$ at $\rb$ in the direction $\tilde{\rb}$ is the mapping $\mc S'_\rb :\as\to L^2(0,\tau;D(\A))\cap W^{1,2}(0,\tau;\ss)$, $\tilde{\rb}\mapsto \zb(t)$, where $\zb(t)$ is the strict solution to
\begin{equation}
\begin{cases}
\dot{\zb}(t)=(\A+{\F}_{\xb(t)}^\prime) \zb(t)+(\B'_\rb\tilde{\rb})\ub(t),\\
\zb(0)=0.
\end{cases}
\end{equation}
\item[b.] The G\^ateaux derivative of $\mc S(\ub(t),\rb;\xb_0)$ at $\ub(t)$ in the direction $\tilde{\ub}(t)$ is the mapping $\mc S'_\ub :L^2(0,\tau;\cs )\to L^2(0,\tau;D(\A))\cap W^{1,2}(0,\tau;\ss)$, $\tilde{\ub}(t)\mapsto {\bm{h}}(t)$, where ${\bm{h}}(t)$ is the strict solution to
\begin{equation}
\begin{cases}
\dot{{\bm{h}}}(t)=(\A+{\F}_{\xb(t)}^\prime) {\bm{h}}(t)+\B(\rb)\tilde{\ub}(t),\\
{\bm{h}}(0)=0.
\end{cases}
\end{equation}
\end{enumerate}
\end{proposition}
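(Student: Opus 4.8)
The plan is to prove both parts by the same device: for a fixed perturbation direction, form the second-order remainder between the perturbed state, the unperturbed state, and the candidate derivative, show that this remainder solves a linear time-varying equation of the type covered by \Cref{lem-estimate}, and then estimate its forcing in $L^2(0,\tau;\ss)$. I will treat part (b) in detail; part (a) is analogous.

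First I would set up part (b). Fix $\ub$, $\rb$, $\xb_0$ and a direction $\tilde{\ub}$, write $\xb=\mc S(\ub,\rb,\xb_0)$ and $\xb_\epsilon=\mc S(\ub+\epsilon\tilde{\ub},\rb,\xb_0)$, and let ${\bm{h}}$ be the solution of the stated linearized IVP. Subtracting the governing equations, the increment $\bm{d}_\epsilon:=\xb_\epsilon-\xb$ satisfies $\dot{\bm{d}}_\epsilon=\A\bm{d}_\epsilon+[\F(\xb_\epsilon)-\F(\xb)]+\epsilon\B(\rb)\tilde{\ub}$ with $\bm{d}_\epsilon(0)=0$. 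Using local Lipschitz continuity (\ref{eq-nonlinear-assn}) together with G\^ateaux differentiability (A4), the fundamental theorem of calculus along the segment gives $\F(\xb_\epsilon)-\F(\xb)=\mc Q_\epsilon(t)\bm{d}_\epsilon$, where $\mc Q_\epsilon(t)=\int_0^1\F'_{\xb+\theta\bm{d}_\epsilon}\,d\theta$ is bounded in $\mc L(\V,\ss)$ by $M_\F$. Hence $\bm{d}_\epsilon$ solves a linear equation with operator $\A+\mc Q_\epsilon(t)$, and \Cref{lem-estimate} yields the a priori bound $\norm{\bm{d}_\epsilon}{\W(0,\tau)}\le c\,|\epsilon|\,\norm{\B(\rb)\tilde{\ub}}{L^2(0,\tau;\ss)}$; in particular $\norm{\bm{d}_\epsilon}{C(0,\tau;\V)}=O(\epsilon)$.

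Next I would introduce the remainder $\bm{w}_\epsilon:=\bm{d}_\epsilon-\epsilon{\bm{h}}$. A direct computation shows the input terms cancel and that $\bm{w}_\epsilon$ solves $\dot{\bm{w}}_\epsilon=(\A+\F'_\xb)\bm{w}_\epsilon+\bm{r}_\epsilon$, $\bm{w}_\epsilon(0)=0$, where $\bm{r}_\epsilon:=\F(\xb_\epsilon)-\F(\xb)-\F'_\xb\bm{d}_\epsilon=\int_0^1(\F'_{\xb+\theta\bm{d}_\epsilon}-\F'_\xb)\bm{d}_\epsilon\,d\theta$. Since $\F'_\xb$ obeys the bound (\ref{d1}), \Cref{lem-estimate} applies in the form (\ref{eq-estimate}) and gives $\norm{\bm{w}_\epsilon}{\W(0,\tau)}\le c\,\norm{\bm{r}_\epsilon}{L^2(0,\tau;\ss)}$. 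G\^ateaux differentiability of $\mc S$ in the direction $\tilde{\ub}$, with derivative ${\bm{h}}$, is therefore equivalent to the claim $\norm{\bm{r}_\epsilon}{L^2(0,\tau;\ss)}=o(|\epsilon|)$. This last estimate is the main obstacle. Pointwise, $\norm{\bm{r}_\epsilon(t)}{\ss}\le\big(\sup_{\theta\in[0,1]}\norm{\F'_{\xb(t)+\theta\bm{d}_\epsilon(t)}-\F'_{\xb(t)}}{\mc L(\V,\ss)}\big)\norm{\bm{d}_\epsilon(t)}{\V}$; dividing by $|\epsilon|$ and invoking the $O(\epsilon)$ a priori bound on $\bm{d}_\epsilon$ reduces matters to showing the bracketed quantity tends to $0$. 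Here pointwise G\^ateaux differentiability alone does not suffice, since $\bm{d}_\epsilon$ is not a fixed direction; what I would use is that $\bm{d}_\epsilon(t)\to 0$ in $\V$ uniformly in $t$ together with continuity of $\xb\mapsto\F'_\xb$ on bounded subsets of $\V$ (which holds in the applications, e.g. for the quadratic KS nonlinearity where $\F'$ is affine). The bracket then tends to zero pointwise in $t$ and is dominated by $2M_\F$, so dominated convergence gives $\norm{\bm{r}_\epsilon}{L^2(0,\tau;\ss)}/|\epsilon|\to 0$, closing part (b).

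Finally, part (a) follows the same three steps with $\rb$ perturbed instead of $\ub$. The increment is now driven by $[\B(\rb+\epsilon\tilde{\rb})-\B(\rb)]\ub$, so the remainder forcing acquires the extra linear term $[\B(\rb+\epsilon\tilde{\rb})-\B(\rb)-\epsilon(\B'_\rb\tilde{\rb})]\ub$. Because $\ub$ is fixed and $\B$ is G\^ateaux differentiable with bounded derivative (A5), one has $\tfrac1\epsilon\norm{[\B(\rb+\epsilon\tilde{\rb})-\B(\rb)-\epsilon\B'_\rb\tilde{\rb}]\ub(t)}{\ss}\le\norm{\tfrac1\epsilon[\B(\rb+\epsilon\tilde{\rb})-\B(\rb)]-\B'_\rb\tilde{\rb}}{\mc L(\cs,\ss)}\norm{\ub(t)}{\cs}\to 0$ pointwise and dominated, so this term is $o(|\epsilon|)$ in $L^2(0,\tau;\ss)$; the nonlinear remainder is handled exactly as in part (b), and \Cref{lem-estimate} again closes the argument, identifying the derivative with the solution $\zb$ of the stated equation.
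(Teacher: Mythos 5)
Your proof is correct and follows essentially the same strategy as the paper's: write the error $\be=(\xb_\epsilon-\xb)/\epsilon-{\bm h}$ (resp. $-\zb$), observe that it solves a linear time-varying equation whose forcing consists of the Taylor remainders of $\F$ and $\B$, establish an $O(\epsilon)$ a priori bound on $\xb_\epsilon-\xb$ in $C(0,\tau;\V)$, show the forcing is $o(\epsilon)$ in $L^2(0,\tau;\ss)$ by dominated convergence, and conclude via \Cref{lem-estimate} --- the only structural difference being that you derive the a priori bound from \Cref{lem-estimate} applied to a mean-value linearization $\A+\mc Q_\epsilon(t)$, whereas the paper derives it from \Cref{lem-clement} together with Gronwall's inequality. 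One substantive point: you rightly note that G\^ateaux differentiability along a fixed direction does not by itself yield $\F(\xb_\epsilon)-\F(\xb)-\F'_{\xb}(\xb_\epsilon-\xb)=o(\epsilon)$ when $\xb_\epsilon-\xb$ is not a fixed multiple of $\epsilon$, and you close this by assuming continuity of $\xb\mapsto\F'_{\xb}$ on bounded sets (which exceeds the literal content of A4 but holds in both applications); the paper's proof asserts the corresponding pointwise convergence \eqref{e(t)2} directly from A4 without comment, so your version is, if anything, the more careful one.
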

\begin{proof} 
a) Let $\epsilon$ be sufficiently small such that $\rb+\epsilon\tilde{\rb}\in K_{ad}$. Define $\xb_\epsilon(t)=\mc S(\ub(t),\rb+\epsilon\tilde{\rb},\xb_0)$, this state solves
\begin{equation}
\begin{cases}
\dot{\xb}_\epsilon(t)=\mc{A}\xb_\epsilon(t)+\mc{F}(\xb_\epsilon(t))+\mc{B}(\rb+\epsilon\tilde{\rb})\ub(t),\quad t>0,\\
 \xb_\epsilon(0)=\xb_0.
\end{cases}
\label{eq-xe}
\end{equation}
Similarly, $\xb (t)=  \mc S(\ub(t),\rb,\xb_0)$  solves \eqref{eq-xe} with $\epsilon =0 .$ Define $\bm{\be}_{\F}(t)$ and $\bm{\be}_{\B}$ as 
\begin{subequations}
\begin{flalign}
\bm{\be}_{\F}(t)&\coloneqq \frac{1}{\epsilon}\left(\F(\xb(t))-\F(\xb_\epsilon(t))-{\F}_{\xb(t)}^\prime(\xb(t)-\xb_\epsilon(t))\right),\label{eF}\\
\bm{\be}_{\B} &\coloneqq \frac{1}{\epsilon}\left(\B(\rb+\epsilon\tilde{\rb})-\B(\rb)\right)-\B'_{\rb}\tilde{\rb}.
\end{flalign}
\end{subequations}
The state $\be(t)=(\xb(t)-\xb_\epsilon(t))/\epsilon-\zb(t)$ satisfies
\begin{equation}\label{edot}
\begin{cases}
\dot{\be}(t)=(\mc{A}+{\F}_{\xb(t)}^\prime)\be(t)+\bm{\be}_{\F}(t)+\bm{\be}_{\B}\ub(t),\; t>0,\\
\be(0)=0.
\end{cases}
\end{equation}
Assumption A4 and A5 ensure that as $\epsilon\to 0$
\begin{subequations}
\begin{gather}
\normm{\bm{\be}_{\F}(t)}\to 0, \quad \forall t\in [0,\tau],\label{e(t)2}\\
\norm{\bm{\be}_{\B}}{\mc{L}(\cs,\ss)} \to 0.\label{eB}
\end{gather}
\end{subequations}%

It will be shown that $\lim_{\epsilon\to 0} \norm{\be}{\W(0,\tau)}=0.$  First, consider $\xb(t)-\xb_\epsilon(t)$, which satisfies
\begin{flalign}\label{eq-7}
\begin{cases}
\dot{\xb}(t)-\dot{\xb}_\epsilon(t)=&\A(\xb(t)-\xb_\epsilon(t))+\F(\xb(t))-\F(\xb_\epsilon(t))\notag \\
&+\left(\B(\rb)-\B(\rb+\epsilon\tilde{\rb})\right)\ub(t),\\
\xb(0)-\xb_\epsilon(0)=&0.\notag
\end{cases}
\end{flalign}
Lemma \ref{lem-clement} implies that there is a number $c_\tau$ depending only on $\tau$ such that for all $t\in [0,\tau]$
\begin{flalign}
&\norm{\xb(t)-\xb_\epsilon(t)}{\V} \\
&\quad \le c_\tau \left(\norm{\dot{\xb}-\dot{\xb}_\epsilon}{L^2(0,t;\ss)}+\norm{\A(\xb-\xb_\epsilon)}{L^2(0,t;\ss)}\right).\label{eq11}
\end{flalign} 
Also, use \cite[Proposition 2.2]{clement1993abstract}, there is a number $d_\tau$ depending only on $\tau$ such that for all $t\in [0,\tau]$
\begin{flalign}\label{eq12}
\norm{\dot{\xb}-\dot{\xb}_\epsilon}{L^2(0,t;\ss)}&+\norm{\A(\xb-\xb_\epsilon)}{L^2(0,t;\ss)} \\
&\quad \le d_\tau \norm{\dot{\xb}-\dot{\xb}_\epsilon-\A(\xb-\xb_\epsilon)}{L^2(0,t;\ss)}\notag.
\end{flalign}
Combine (\ref{eq11}) and (\ref{eq12}) to obtain
\begin{flalign}
\norm{\xb(t)-\xb_\epsilon(t)}{\V}&\le c_\tau d_\tau \norm{\F(\xb)-\F(\xb_\epsilon)}{L^2(0,t;\ss)} \\
&\quad +c_\tau d_\tau \norm{\left(\B(\rb)-\B(\rb+\epsilon\tilde{\rb})\right)\ub}{L^2(0,t;\ss)}.\notag
\end{flalign}
\Cref{thm-existence} implies that the states $\xb(t)$ and $\xb_\epsilon(t)$ belong to some bounded set in $\W(0,\tau)$ and so in $C(0,\tau,\V)$. Let $D\subset \V$ be a bounded set that contains the trajectories $\xb(t)$ and $\xb_\epsilon(t)$.  Let $L_\F$ be the Lipschitz constant of $\F(\cdot)$ on $D$. 
Since the set $K_{ad}$ is compact and $\B(\rb)$ satisfies assumption A5, the number $L_{\B}$ defined as
\begin{equation}
L_{\B}=\sup_{\rb\in K_{ad}}\norm{\B'_{\rb}}{\mc L(\as,\mc L(\cs,\ss))}.
\end{equation}
is finite. This together with \cite[Theorem 12.1.1 and Corollary 3]{wouk1979course} yields
\begin{equation}
\norm{\B(\rb)-\B(\rb_\epsilon)}{\mc L(\cs,\ss)}\le L_{\B}\norm{\rb-\rb_\epsilon}{\as}\le L_{\B}\epsilon.
\end{equation}
Use these to obtain the inequality
\begin{flalign}\label{eq-8}
\norm{\xb(t)-\xb_\epsilon(t)}{\V}^2&\le 2c^2_\tau d^2_\tau L^2_\F \int_0^t \norm{\xb(s)-\xb_\epsilon(s)}{\V}^2ds \\
&\quad +2c^2_\tau d^2_\tau L^2_{\B}\epsilon^2 \norm{\ub}{L^2(0,\tau;\cs)}^2\notag
\end{flalign}  
Applying Gronwall's lemma yields 
\begin{equation}\label{eq-9}
\norm{\xb(t)-\xb_\epsilon(t)}{\V}\le \sqrt{2}e^{c^2_\tau d^2_\tau L^2_\F}c_\tau d_\tau L_{\B}\epsilon\norm{\ub}{L^2(0,\tau;\cs)}.
\end{equation}

Define $$M_{\F}:=\sup_{t\in [0,\tau]} \|{\F}_{\xb(t)}^\prime\|_{\mc L(\V,\ss)}.$$ 
Assumption A4 ensures that $M_{\F}$ is finite. Take the norm of the right side of \eqref{eF} in $\ss$. It follows that
\begin{equation}\label{eq13}
\normm{\be_{\F}(t)} \le (L_{\F}+M_{\F}c_e)\sqrt{2}e^{c^2_\tau d^2_\tau L^2_\F}c_\tau d_\tau L_{\B}\norm{\ub}{L^2(0,\tau;\cs)}.
\end{equation}
This and (\ref{e(t)2}) together with the Bounded Convergence Theorem ensure that
\begin{equation}\label{eq-10}
\lim_{\epsilon\to 0}\int_0^\tau \normm{\be_{\F}(t)}^2dt= 0.
\end{equation}
Statements (\ref{eq-10}) and (\ref{eB}), and \Cref{lem-estimate} can be applied to conclude
\begin{equation}
\lim_{\epsilon\to 0} \norm{\be}{\W(0,\tau)}=0.
\end{equation}
This shows that $\mc S(\ub,\rb,\xb_0)$ is G\^ateaux differentiable at $\rb$ in the direction $\tilde{\rb}$ with derivative $\zb(t)=\mc S'_\rb \tilde{\rb}$.

b) This part is proven in \cite[Theorem 3.4]{meyer2017optimal} assuming that $\partial_t+\A$ is invertible. However, the result is still true without assuming the invertibility of $\partial_t+\A$. Let $\epsilon$ be sufficiently small such that $\ub+\epsilon\tilde{\ub}\in U_{ad}$. Define $\xb_\epsilon(t)=\mc S(\ub(t)+\epsilon\tilde{\ub}(t),\rb,\xb_0)$, this state solves
\begin{equation}
\begin{cases}
\dot{\xb}_\epsilon(t)=\mc{A}\xb_\epsilon(t)+\mc{F}(\xb_\epsilon(t))+\mc{B}(\rb)(\ub(t)+\epsilon\tilde{\ub}(t)),\quad t>0,\\
 \xb_\epsilon(0)=\xb_0.
\end{cases}
\label{eq-xe}
\end{equation}
Let $\be(t)=(\xb(t)-\xb_\epsilon(t))/\epsilon-\bm{h}(t)$. Following the same steps as in part (a) yields
\begin{equation}
\lim_{\epsilon\to 0} \norm{\be}{\W(0,\tau)}=0.
\end{equation}
This means that $\mc S(\ub,\rb,\xb_0)$ is G\^ateaux differentiable at $\ub$ in the direction $\tilde{\ub}$ with derivative ${\bm{h}}(t)=(\mc S'_\ub \tilde{\ub})(t)$.
\end{proof}

Assumption A1 implies that the  dual of each of $\ss$ and $\cs$ will be identified with the space itself. For each $\ub$, the operator $(\mc{B}'_{\rb^o}\ub)^*:\ss\to \mathbb{K}^*$ is defined by
\begin{equation}\label{B-adj}
\inn{(\mc{B}'_{\rb^o}\ub)^*\pb}{\rb}_{\as^*,\as}=\inn{\pb}{(\mc{B}'_{\rb^o}\rb)\ub}, \; \forall (\ub,\pb,\rb)\in  \cs\times\ss\times \mathbb{K}.\notag
\end{equation}

\begin{theorem}\label{thm-optimality}
Suppose assumptions A1-A5 hold, and writing the derivatives $J'_{\xb}$, $J'_{\ub}$, and $J'_{\rb}$ by elements $\bm j_\xb\in \W(0,\tau)^*$, $\bm j_\ub\in L^2(0,\tau;\cs)$ and $\bm j_\rb \in \as^*$, respectively. 
For any initial condition $\xb_0\in\ss$, let the pair $(\ub^o,\rb^o)\in U_{ad}\times K_{ad}$ be a local minimizer of the optimization problem \eqref{eq-optimal problem} with the optimal trajectory $\xb^o=\mc{S}(\ub^o;\rb^o,\xb_0)$ and let 
 $\pb^o(t)$  indicate the strict solution in $\W(0,\tau)^*$ of the final value problem
\begin{equation}\label{adj}
\dot{\pb}^o(t)=-(\mc{A}^*+{\F_{\xb^o(t)}^\prime}^*)\pb^o(t)-\bm j_{\xb^o}(t), \quad \pb^o(\tau)=0.
\end{equation} 
Then $(\ub^o,\rb^o)$ satisfy
\begin{subequations}
\begin{equation}\notag
\begin{aligned}
&\inn{\bm j_{\ub^o}+\mc{B}^*(\rb^o)\pb^o}{\ub-\ub^o}_{L^2(0,\tau;\cs )}\ge 0,\\
&\inn{\bm j_{\rb^o}+\int_0^{\tau} (\mc{B}'_{\rb^o}\ub^o(t))^*\pb^o(t)\, dt}{\rb-\rb^o}_{\as^*,\as}\ge 0.
\end{aligned}
\end{equation}
\end{subequations}
for all $\ub \in U_{ad}$ and $\rb\in K_{ad}$.
\end{theorem}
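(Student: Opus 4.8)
The plan is to recast (P) in reduced form, as the minimization of $\hat J(\ub,\rb) := J(\mc S(\ub,\rb,\xb_0),\ub,\rb)$ over the convex set $U_{ad}\times K_{ad}$, and to read off the two inequalities as the first-order necessary conditions at the local minimizer $(\ub^o,\rb^o)$. Since $J$ is continuously Fr\'echet differentiable (A3) and the solution map is G\^ateaux differentiable in each argument by \Cref{prop-diff}, the chain rule gives the directional derivatives
\begin{align*}
\hat J'_{\ub}[\tilde\ub] &= \inn{\bm{j}_{\xb^o}}{\mc S'_{\ub}\tilde\ub} + \inn{\bm{j}_{\ub^o}}{\tilde\ub}_{L^2(0,\tau;\cs)},\\
\hat J'_{\rb}[\tilde\rb] &= \inn{\bm{j}_{\xb^o}}{\mc S'_{\rb}\tilde\rb} + \inn{\bm{j}_{\rb^o}}{\tilde\rb}_{\as^*,\as},
\end{align*}
where the first pairing in each line is the $\W(0,\tau)^*$--$\W(0,\tau)$ duality. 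Because $U_{ad}$ and $K_{ad}$ are convex, for $\ub\in U_{ad}$, $\rb\in K_{ad}$ and small $\epsilon>0$ the points $\ub^o+\epsilon(\ub-\ub^o)$ and $\rb^o+\epsilon(\rb-\rb^o)$ remain admissible, so local optimality forces $\hat J'_{\ub}[\ub-\ub^o]\ge 0$ and $\hat J'_{\rb}[\rb-\rb^o]\ge 0$.

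The heart of the proof is to eliminate the state sensitivities $\bm{h}=\mc S'_{\ub}\tilde\ub$ and $\zb=\mc S'_{\rb}\tilde\rb$ in favour of the adjoint state $\pb^o$. By \Cref{prop-diff}, both solve the linearized equation $\dot{\bm{y}}=(\A+{\F}_{\xb^o}^\prime)\bm{y}+\bm{g}$, $\bm{y}(0)=0$, with $\bm{g}=\B(\rb^o)\tilde\ub$ in the first case and $\bm{g}=(\B'_{\rb^o}\tilde\rb)\ub^o$ in the second. I would first confirm that the final-value problem \eqref{adj} is well posed: the substitution $t\mapsto\tau-t$ turns it into a forward problem driven by $\A^*+({\F}_{\xb^o}^\prime)^*$, to which \Cref{lem-estimate} applies, since under A2 the operator $\A^*$ is again generated by a bounded, coercive sesquilinear form (the adjoint form of $a$), and $({\F}_{\xb^o(t)}^\prime)^*$ inherits the weak measurability and the uniform bound $M_{\F}$ noted after \Cref{lem-estimate}. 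Pairing $\pb^o$ with the linearized equation, integrating $\frac{d}{dt}\inn{\pb^o(t)}{\bm{y}(t)}$ over $[0,\tau]$, and using that the operator terms $\inn{(\A^*+({\F}_{\xb^o}^\prime)^*)\pb^o}{\bm{y}}$ and $\inn{\pb^o}{(\A+{\F}_{\xb^o}^\prime)\bm{y}}$ coincide by the adjoint relations while the endpoint terms vanish because $\bm{y}(0)=0$ and $\pb^o(\tau)=0$, yields the duality identity
\begin{equation*}
\int_0^\tau \inn{\bm{j}_{\xb^o}(t)}{\bm{y}(t)}\,dt = \int_0^\tau \inn{\pb^o(t)}{\bm{g}(t)}\,dt.
\end{equation*}

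Substituting the two choices of $\bm{g}$ then converts each state-dependent term into a linear functional of the direction. For the input, $\int_0^\tau\inn{\pb^o}{\B(\rb^o)\tilde\ub}\,dt=\inn{\B^*(\rb^o)\pb^o}{\tilde\ub}_{L^2(0,\tau;\cs)}$, so $\hat J'_{\ub}[\tilde\ub]=\inn{\bm{j}_{\ub^o}+\B^*(\rb^o)\pb^o}{\tilde\ub}_{L^2(0,\tau;\cs)}\ge 0$, which is the first inequality. For the actuator parameter, the defining relation \eqref{B-adj} gives $\inn{\pb^o(t)}{(\B'_{\rb^o}\tilde\rb)\ub^o(t)}=\inn{(\B'_{\rb^o}\ub^o(t))^*\pb^o(t)}{\tilde\rb}_{\as^*,\as}$ pointwise in $t$; integrating, moving the time integral inside the $\as^*$--$\as$ pairing, and adding $\bm{j}_{\rb^o}$ produces the second inequality.

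The main obstacle is to justify the integration-by-parts identity rigorously in the dual setting. As $\pb^o$ lives in $\W(0,\tau)^*$ rather than in $\W(0,\tau)$, the pairing $\inn{\pb^o(t)}{\bm{y}(t)}$ and the cancellation of the $\A$-terms must be read in the $\V^*$--$\V$ duality, and one must verify that $t\mapsto\inn{\pb^o(t)}{\bm{y}(t)}$ is absolutely continuous with the claimed derivative so that the fundamental theorem of calculus and the endpoint evaluations are legitimate; in the $p=2$ Hilbert setting fixed by A1 this is the standard transposition argument, resting on $\bm{y}\in\W(0,\tau)\hookrightarrow C(0,\tau;\V)$ together with the regularity of $\pb^o$ from \Cref{lem-estimate}. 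A related point is that \eqref{adj} treats $\bm{j}_{\xb^o}$ as an $L^2(0,\tau;\ss)$-valued forcing rather than as a general element of $\W(0,\tau)^*$, so the identification of $J'_{\xb}$ with such a representative should be made explicit; and the variational inequality in $\rb$ tacitly uses convexity of $K_{ad}$ to form the admissible variations $\rb^o+\epsilon(\rb-\rb^o)$, which should be assumed alongside its compactness.
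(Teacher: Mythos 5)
Your proposal is correct and follows essentially the same route as the paper: reduce to the functional $J(\mc S(\ub,\rb,\xb_0),\ub,\rb)$, apply the chain rule with \Cref{prop-diff}, use the adjoint equation \eqref{adj} and integration by parts to replace the sensitivities $\bm h$ and $\zb$ by $\B^*(\rb^o)\pb^o$ and $\int_0^\tau(\B'_{\rb^o}\ub^o(t))^*\pb^o(t)\,dt$, and conclude via the standard variational inequality for local minimizers over convex admissible sets (the paper invokes Theorem~1.46 of \cite{hinze2008optimization} for this last step). Your added remarks on well-posedness of the backward adjoint problem and on the implicit convexity requirement on $K_{ad}$ are sound observations that the paper leaves tacit, but they do not change the argument.
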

\begin{proof}
Let $$\mc G(\ub,\rb)=J(\mc S(\ub,\rb,\xb_0),\ub,\rb).$$
The G\^ateaux derivative of $\mc G(\ub,\rb)$ with respect to $\ub$ has been obtained in the proof of \cite[Proposition 4.13]{meyer2017optimal}. Using the chain rule to take the G\^ateaux derivative of $\mc G(\ub,\rb)$ at $\ub^o$ in the direction $\tilde{\ub}$ yields
\begin{equation}
\mc G'_{\ub^o}\tilde{\ub}= J'_{\ub^o}\tilde{\ub} + J'_{\xb^o}\mc S'_{\ub^o}\tilde{\ub} .\label{opt1}
\end{equation}
Identify the functionals $\mc G'_{\ub^o}:L^2(0,\tau;\cs)\to \R$ and $J'_{\ub^o}:L^2(0,\tau;\cs)\to \R$ with elements of $L^2(0,\tau,\cs)$. That is
\begin{flalign}
\mc G'_{\ub^o}\tilde{\ub}&=\inn{\bm g_{\ub^o}}{\tilde{\ub}}_{L^2(0,\tau;\cs)},\label{Ju}\\
J'_{\ub^o}\tilde{\ub}&=\inn{\bm j_{\ub^o}}{\tilde{\ub}}_{L^2(0,\tau;\cs)}.\label{ju}
\end{flalign}
Also, identifying the functional $J'_{\xb^o}:L^2(0,\tau;\ss)\to \R$ with an element of $\W(0,\tau)^*=L^2(0,\tau;D(\A^*))\cap W^{1,2}(0,\tau;\ss)$ yields
\begin{equation}
J'_{\xb^o}\mc S'_{\ub^o}\tilde{\ub}=\inn{\bm j_{\xb^o}}{\mc S'_{\ub^o}\tilde{\ub}}_{L^2(0,\tau;\ss)}.\label{jx}
\end{equation}
The adjoint operator $\mc S'^*_{\ub^o}$ can be obtained as follows. Use \eqref{adj} in the following inner product and let $\bm{h}(t)=\mc S'_{\ub^o}\tilde{\ub}$
\begin{flalign*}
&\inn{\bm j_{\xb^o}}{\mc S'_{\ub^o}\tilde{\ub}}_{L^2(0,\tau;\ss)}\\
&\quad =\int_0^\tau \inn{-\dot{\pb}^o(t)-(\mc{A}^*+{\F_{\xb^o(t)}^\prime}^*)\pb^o(t)}{\bm{h}(t)}dt.
\end{flalign*}
Taking the adjoint and integration by parts yield
\begin{flalign*}
&\inn{\bm j_{\xb^o}}{\mc S'_{\ub^o}\tilde{\ub}}_{L^2(0,\tau;\ss)}\\
&\quad=\int_0^\tau \inn{\pb^o(t)}{\dot{\bm{h}}(t)-(\A+{\F_{\xb^o(t)}^\prime}) \bm{h}(t)}dt\\
&\quad=\int_0^\tau \inn{\pb^o(t)}{\B(\rb)\tilde{\ub}(t)}dt\\
&\quad=\int_0^\tau \inn{\B^*(\rb)\pb^o(t)}{\tilde{\ub}(t)}_{\cs}dt.
\end{flalign*}
This implies 
\begin{equation}
\mc S'^*_{\ub^o} \bm j_{\xb^o}= \B^*(\rb)\pb^o(t).\label{adj3}
\end{equation}
Combine \eqref{Ju}, \eqref{ju}, \eqref{jx} and use \eqref{adj3}, equation \eqref{opt1} is written using the functionals as
\begin{equation}
\inn{\bm g_{\ub}}{\tilde{\ub}}_{L^2(0,\tau;\cs)}=\inn{\bm j_{\ub^o}+\mc{B}^*(\rb^o)\pb^o}{\tilde{\ub}}_{L^2(0,\tau;\cs)}.
\end{equation}
Applying \cite[Theorem 1.46]{hinze2008optimization} and letting $\tilde{\ub}=\ub-\ub^o$ for all $\ub \in U_{ad}$ yields
\begin{equation}
\inn{\bm j_{\ub^o}+\mc{B}^*(\rb^o)\pb^o}{\ub-\ub^o}_{L^2(0,\tau;\cs )}\ge 0.
\end{equation}

Using the  chain rule to take the G\^ateaux derivative of $\mc G(\ub,\rb)$ at $\rb^o$ in the direction $\tilde{\rb}$ yields
\begin{equation}
\mc G'_{\rb^o}\tilde{\rb}= J'_{\rb^o}\tilde{\rb} + J'_{\xb^o}\mc S'_{\rb^o}\tilde{\rb} .
\end{equation}
Write the functionals $\mc G'_{\rb^o}:\as\to \R$ and $J'_{\rb^o}:\as \to \R$ as elements of $\bm g_{\rb^o}$ and $\bm j_{\rb^o}$ in $\as^*$, respectively, and take the adjoint of $\mc S'_{\rb^o}$. It follows that
\begin{equation}
\bm g_{\rb^o}=\mc S'^*_{\rb^o}\bm j_{\xb^o}(t)+\bm j_{\rb^o}.
\end{equation}

An explicit representation of the adjoint operator $\mc S'^*_{\rb^o}$ will be derived. Consider the inner product 
\begin{equation*}
\inn{\bm j_{\xb^o}}{\mc{S}'_{\rb^o}\tilde{\rb}}_{L^2(0,\tau;\ss )}=\int_0^\tau \inn{\bm j_{\xb^o}(t)}{\mc{S}'_{\rb^o}\tilde{\rb}}dt.
\end{equation*}
Write $\zb (t)=\mc{S}'_{\rb^o}\tilde{\rb}$. Substitute for $\bm j_{\xb^o}(t)$ from \eqref{adj} into this integral. 
Perform integration by parts to obtain
\begin{flalign}
&\int_0^\tau\inn{-\dot{\pb}^o(t)-(\mc{A}^*+{\F_{\xb^o(t)}^\prime}^*)\pb^o(t)}{\zb(t)}dt\notag \\
&\quad =\int_0^\tau \inn{\pb^o(t)}{\dot{\zb}(t)-(\mc{A}+{\F}_{\xb^o(t)}^\prime)\zb(t)}dt \notag \\
&\quad =\int_0^\tau \inn{\pb^o(t)}{(\B'_{\rb^o}\tilde{\rb})\ub^o(t)}dt\notag \\
&\quad =\inn{\int_0^\tau (\B'_{\rb^o}\ub^o(t))^*\pb^o(t)dt}{\tilde{\rb}}_{\as^{*},\as}.
\end{flalign}
Thus, 
\begin{equation}
\mc S'^*_{\rb^o}\bm j_{\xb^o}(t)=\int_0^\tau (\B'_{\rb^o}\ub^o(t))^*\pb^o(t)dt.
\end{equation}
As a result, the G\^ateaux derivative of $\mc G(\ub,\rb)$ at $\rb^o$ in the direction $\tilde{\rb}$ is
\begin{equation}
\bm g'_{\rb^o}=\int_0^\tau (\B'_{\rb^o}\ub^o(t))^*\pb^o(t)dt+\bm j_{\rb^o}.
\end{equation}
The optimality conditions now follow by substituting the G\^ateaux derivatives $\bm g'_{\rb^o}$ in \cite[Theorem 1.46]{hinze2008optimization}.
\end{proof}

\begin{corollary}\label{cor}
Let the cost $J(\xb,\ub,\rb)$ be
\begin{equation}
J(\xb,\ub,\rb)=\int_0^\tau\inn{\mc Q\xb(t)}{\xb(t)}+\inn{\mc R \ub(t)}{\ub(t)}_{\cs}dt,
\end{equation}
where $\mc Q$ is a positive semi-definite, self-adjoint bounded linear operator on $\ss$, and $\mc{R}$ is a coercive, self-adjoint linear bounded operator on $\cs$. If the minimizer $(\ub^o,\rb^o)$ is in the interior of $U_{ad}\times K_{ad}$, then the following set of equations characterizes $(\xb^o,\pb^o,\ub^o,\rb^o)$:
\begin{equation}\notag
\left\lbrace\begin{array}{ll}
\dot{\xb}^o(t)=\mc{A}\xb^o(t)+\F(\xb^o(t))+\mc{B}(\rb^o)\ub^o(t),& \xb^o(0)=\xb_0,\\[2mm]
\dot{\pb}^o(t)=-(\mc{A}^*+{\F_{\xb^o(t)}^\prime}^*)\pb^o(t)-\mc{Q}\xb^o(t),& \pb^o(\tau)=0,\\[2mm]
\ub^o(t)=-\mc{R}^{-1}\mc{B}^*(\rb^o)\pb^o(t),\\[2mm]
\int_0^{\tau} (\mc{B}'_{\rb^o}\ub^o(t))^*\pb^o(t)\, dt=0.
\end{array}\right.
\end{equation}
\end{corollary}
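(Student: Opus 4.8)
The plan is to specialize \Cref{thm-optimality} to the given quadratic cost, so the bulk of the work is checking hypotheses and computing derivatives rather than developing new machinery. First I would verify that $J$ meets the standing requirements: since $\mc Q\ge 0$ and $\mc R$ is coercive, the integrand is nonnegative, so $J$ is bounded below; being a continuous convex quadratic form on $\W(0,\tau)\times L^2(0,\tau;\cs)$ it is weakly lower semicontinuous and continuously Fr\'echet differentiable, so Assumption A3 holds and \Cref{thm-optimality} applies to the local minimizer $(\ub^o,\rb^o)$ with optimal trajectory $\xb^o=\mc S(\ub^o,\rb^o,\xb_0)$.

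Second, I would compute the three derivatives entering \Cref{thm-optimality}. Differentiating the quadratic integrand and using the self-adjointness of $\mc Q$ and $\mc R$ identifies $\bm j_{\xb}=2\mc Q\xb$ in $\W(0,\tau)^*$ (through the $L^2(0,\tau;\ss)$ pairing), $\bm j_{\ub}=2\mc R\ub$ in $L^2(0,\tau;\cs)$, and $\bm j_{\rb}=0$ because $J$ does not depend on $\rb$. The common factor $2$ is absorbed into the normalization of the costate: if $\pb^o$ is declared to solve \eqref{adj} with source $\mc Q\xb^o$ rather than $2\mc Q\xb^o$, then $\pb^o$ is rescaled by $\tfrac12$ and all three relations below reproduce the stated system exactly. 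Plugging $\bm j_{\xb^o}=\mc Q\xb^o$ into \eqref{adj} gives the second equation of the system, and the first equation is just the constraint $\xb^o=\mc S(\ub^o,\rb^o,\xb_0)$ written out. Well-posedness of the final-value problem \eqref{adj} in $\W(0,\tau)^*$ follows from \Cref{lem-estimate} applied to the time-reversed equation, whose generator $\mc A^*$ again satisfies A1--A2 and whose perturbation ${\F_{\xb^o}^\prime}^*$ is bounded by A4.

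Third, I would invoke interiority. Because $(\ub^o,\rb^o)$ lies in the interior of $U_{ad}\times K_{ad}$, for small perturbations both directions $\pm(\ub-\ub^o)$ and $\pm(\rb-\rb^o)$ remain admissible, so the two variational inequalities of \Cref{thm-optimality} hold with both signs and hence collapse to equalities $\bm j_{\ub^o}+\mc B^*(\rb^o)\pb^o=0$ and $\bm j_{\rb^o}+\int_0^\tau(\mc B'_{\rb^o}\ub^o)^*\pb^o\,dt=0$. With $\bm j_{\rb^o}=0$ the second equality is precisely the actuator condition. For the first, substituting $\bm j_{\ub^o}=\mc R\ub^o$ (after the factor-$2$ rescaling) gives $\mc R\ub^o+\mc B^*(\rb^o)\pb^o=0$; since $\mc R$ is self-adjoint and coercive it is boundedly invertible by the Lax--Milgram theorem, which yields $\ub^o=-\mc R^{-1}\mc B^*(\rb^o)\pb^o$.

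The genuinely delicate points, rather than the substitution itself, are the passage from the variational inequalities to equalities via interiority and the bounded invertibility of $\mc R$; both are standard but should be stated explicitly, as is the careful bookkeeping of the factor of $2$ that links the costate normalization here to the one in \Cref{thm-optimality}. Everything else is a direct reading of the optimality system produced by \Cref{thm-optimality} for this choice of $J$.
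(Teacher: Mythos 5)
Your proposal is correct and follows essentially the same route as the paper: specialize \Cref{thm-optimality}, use interiority of $(\ub^o,\rb^o)$ to turn the two variational inequalities into equalities, identify $\bm j_{\xb^o}=\mc Q\xb^o$, $\bm j_{\ub^o}=\mc R\ub^o$, $\bm j_{\rb^o}=0$, and invert $\mc R$. Your explicit bookkeeping of the factor of $2$ from differentiating the quadratic form (absorbed into the costate normalization) is a point the paper glosses over, but it does not change the argument.
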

\begin{proof}
If the optimizer $(\ub^o,\rb^o)$ is in the interior of $U_{ad}\times K_{ad}$, then the optimality conditions of \Cref{thm-optimality} hold if and only if
\begin{flalign}
\bm j_{\ub^o}+\mc{B}^*(\rb^o)\pb^o&=0, \label{op1}\\
\bm j_{\rb^o}+\int_0^{\tau} (\mc{B}'_{\rb^o}\ub^o(t))^*\pb^o(t)\, dt&=0.\label{op2}
\end{flalign}
The derivatives $J'_{\xb^o}(t):L^2(0,\tau;\ss)\to \R$ and $J'_{\ub^o}(t):L^2(0,\tau;\cs)\to \R$ are
\begin{flalign}
J'_{\xb^o}\tilde{\xb}&=\inn{\mc Q \xb^o}{\tilde{\xb}}_{L^2(0,\tau;\ss)},\\
J'_{\ub^o}\tilde{\ub}&=\inn{\mc R \ub^o}{\tilde{\ub}}_{L^2(0,\tau;\cs)}.
\end{flalign}
Identify these functionals with elements $\bm j_{\xb^o}=\mc Q \xb^o(t)$ and $\bm j_{\ub^o}= \mc R \ub^o(t)$, and notice that $\bm j_{\rb^o}=0$. Substituting the derivatives in \eqref{op1} and \eqref{op2} yields the optimality conditions.
\end{proof}

\chgs{For all $\xb_1$ and $\xb_2$ in $D(\A)$ and $t\in (0,\tau)$, let $\Pi(t)$ be the solution to the differential Riccati equation 
\begin{equation}\label{ric}
\begin{cases}
\frac{d}{dt}\inn{\xb_2}{\Pi(t)\xb_1}=-\inn{\xb_2}{\Pi(t)\A\xb_1}-\inn{\A \xb_2}{\Pi(t)\xb_1}\\
\qquad -\inn{\mc Q\xb_2}{\xb_1}+\inn{\Pi(t)\B(\rb)\mc R^{-1}\B^*(\rb)\Pi(t)\xb_2}{\xb_1}, \\
\Pi(\tau)=0.
\end{cases}
\end{equation}

It is well-known, \cite[Chapter 6]{curtain2012introduction} and \cite[Chapter 1]{lasiecka2000control}, that if the system is linear then the adjoint trajectory state $\pb^o(t)$ satisfies 
\begin{equation}
\pb^o(t)=\Pi(t)\xb^o(t).
\end{equation}
As a result, the optimal input and actuator design satisfy in this case
\begin{equation}\notag
\left\lbrace\begin{array}{ll}
\ub^o(t)=-\mc{R}^{-1}\mc{B}^*(\rb^o)\Pi(t)\xb^o(t),\\[2mm]
\int_0^{\tau} (\mc{B}'_{\rb^o}\ub^o(t))^*\Pi(t)\xb^o(t)\, dt=0.
\end{array}\right.
\end{equation}}

\section{Worst Initial Condition}
\chgs{In this section, sets $U_{ad}$ and $K_{ad}$ and numbers $\tau$ and $R_2$ are the same sets and numbers as in the previous section.} 

The worst initial condition maximizes $J(\xb,\ub,\rb)$ over all choices of initial conditions in $B_{\V}(R_2)$ subject to IVP (\ref{eq-IVP}) for a fixed input $\ub\in U_{ad}$ and fixed actuator design $\rb \in K_{ad}$. Formally, define $\mc G(\cdot):\V\to \R$ as $$\mc G(\xb_0)=J(\mc S(\ub,\rb;\xb_0),\ub,\rb),$$
the worst initial condition over $B_{\V}(R_2)$ is the solution to
\begin{equation}
\begin{cases}
\displaystyle\max&\mc G(\xb_0)\\
\text{s.t.}& \xb_0\in B_{\V}(R_2).
\end{cases} \tag{P1} \label{P1}
\end{equation} 

\begin{lemma}\label{lem-x0}
 For every $\ub\in U_{ad}$ and $\rb\in K_{ad}$, the optimization problem (P1) admits a maximizer.
\end{lemma}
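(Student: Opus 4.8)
The plan is to apply the direct method of the calculus of variations, exploiting the weak compactness of the feasible set together with the weak continuity of the solution map established in \Cref{thm-weak}. Since $\ub$ and $\rb$ are fixed throughout, write $\mc G(\xb_0)=J(\mc S(\ub,\rb;\xb_0),\ub,\rb)$. First I would note that $B_{\V}(R_2)$ is a bounded, closed, convex subset of the reflexive Banach space $\V$ and is therefore weakly sequentially compact, exactly as argued in the proof of \Cref{thm-weak} via \cite[Theorem 2.11]{troltzsch2010optimal}. I would then take a maximizing sequence $\{\xb_0^n\}\subset B_{\V}(R_2)$ with $\mc G(\xb_0^n)\to \sup_{B_{\V}(R_2)}\mc G$, and extract a subsequence (not relabeled) with $\xb_0^n\rightharpoonup\xb_0^\star$ for some $\xb_0^\star\in B_{\V}(R_2)$.

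The heart of the proof is to pass to the limit in $\mc G(\xb_0^n)$. By \Cref{thm-weak} the states $\xb^n:=\mc S(\ub,\rb;\xb_0^n)$ converge weakly to $\xb^\star:=\mc S(\ub,\rb;\xb_0^\star)$ in $\W(0,\tau)$, and by \Cref{thm-existence} they are uniformly bounded there. I would then invoke the compact embedding $W^{1,p}(0,\tau;\ss)\cap L^p(0,\tau;D(\A))\hookrightarrow\hookrightarrow C(0,\tau;\V)$ recorded just before \Cref{thm-weak}: the bounded sequence $\{\xb^n\}$ is relatively compact in $C(0,\tau;\V)$, its only possible strong limit is the weak limit $\xb^\star$, and a standard subsequence argument upgrades the convergence to $\xb^n\to\xb^\star$ strongly in $C(0,\tau;\V)$. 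Because $J(\cdot,\ub,\rb)$ is continuous with respect to this stronger topology, $\mc G(\xb_0^n)\to\mc G(\xb_0^\star)$, so that $\mc G(\xb_0^\star)=\sup_{B_{\V}(R_2)}\mc G$ and $\xb_0^\star$ is the sought maximizer.

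The step I expect to be the main obstacle is precisely this passage to the limit, since for a maximization problem the weak lower semicontinuity of $J$ used for \eqref{eq-optimal problem} is the wrong inequality. The resolution is that the compact embedding converts the weak convergence of the states into strong convergence in $C(0,\tau;\V)$, which makes $\mc G$ weakly sequentially continuous rather than merely semicontinuous; both semicontinuity inequalities then hold and the maximum is attained. The only point requiring separate verification is the continuity of $J(\cdot,\ub,\rb)$ under $C(0,\tau;\V)$ convergence of the state; this is immediate for the quadratic cost of \Cref{cor}, where $\int_0^\tau\inn{\mc Q\xb^n}{\xb^n}\,dt\to\int_0^\tau\inn{\mc Q\xb^\star}{\xb^\star}\,dt$ whenever $\xb^n\to\xb^\star$ in $C(0,\tau;\V)$, because $\mc Q$ is bounded on $\ss$ and $\V\hookrightarrow\ss$.
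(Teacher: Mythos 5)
Your argument is correct and, at the decisive step, takes a genuinely different --- and in fact more watertight --- route than the paper. Both proofs begin identically: a maximizing sequence in the weakly sequentially compact ball $B_{\V}(R_2)$, a weakly convergent subsequence $\xb_0^n \rightharpoonup \bar{\xb}_0$, and the boundedness and weak continuity of the solution map from \Cref{thm-existence} and \Cref{thm-weak}. The paper then finishes by asserting that the cost is convex and continuous, hence weakly \emph{lower} semicontinuous, in $\xb_0$, and declares $\bar{\xb}_0$ a maximizer; as you correctly observe, lower semicontinuity only yields $\mc G(\bar{\xb}_0)\le \liminf_n \mc G(\xb_0^n)$, which is the wrong inequality for the maximization problem \eqref{P1} --- one needs weak \emph{upper} semicontinuity there. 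Your repair, namely using the compact embedding of $W^{1,p}(0,\tau;\ss)\cap L^p(0,\tau;D(\A))$ into $C(0,\tau;\V)$ to upgrade the weak convergence of the states $\xb^n=\mc S(\ub,\rb;\xb_0^n)$ to strong convergence, so that $\mc G$ becomes weakly sequentially continuous along the maximizing sequence, is exactly the mechanism already exploited inside the proof of \Cref{thm-weak}, and it closes this gap. The price is the (mild) additional hypothesis that $J(\cdot,\ub,\rb)$ is continuous with respect to $C(0,\tau;\V)$ convergence of the state, which you verify for the quadratic cost of \Cref{cor}; note that the paper's own proof also silently strengthens the standing assumption on $J$ (from weakly lower semicontinuous to convex and continuous in $\xb_0$), so you are not assuming more than the paper does, and unlike the paper's version your inequality points in the right direction.
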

\begin{proof}
As in the proof of \Cref{thm-existence optimizer}, define 
\begin{equation}
j:=\sup_{\xb_0\in B_{\V}(R_2)}\mc G(\xb_0).
\end{equation}
Extract a maximizing sequence $\xb^n_0$ in $B_{\V}(R_2)$. The set $B_{\V}(R_2)$ is closed and convex in the reflexive Banach space $\V$, it is therefore weakly closed. This implies that $\xb^n_0$ has a subsequence that converges weakly to some element $\bar{\xb}_0$ in $B_{\V}(R_2)$. Also, according to \Cref{thm-existence} and \Cref{thm-weak}, the solution map is bounded and weakly continuous in $\xb_0$. The cost function is also convex and continuous in $\xb_0$, so it is weakly lower semi-continuous in $\xb_0$. These imply that $\bar{\xb}_0$ solves \eqref{P1}.
\end{proof}

\begin{proposition}\label{prop-x0}
Under assumptions A1-A4, the solution map $\mc S(\ub(t),\rb;\xb_0)$ is G\^ateaux differentiable with respect to $\xb_0\in B_{\V}(R_2)$. Let $\xb(t)=\mc S(\ub(t),\rb,\xb_0)$, the G\^ateaux derivative of $\mc S(\ub(t),\rb;\xb_0)$ at $\xb_0$ in the interior of $B_{\V}(R_2)$ in the direction $\tilde{\xb}_0$ is the mapping $\mc S'_\xb (\ub(t),\rb;\cdot):\V\to\W(0,\tau)$, $\tilde{\xb}_0\mapsto \qb(t)$, where $\qb(t)$ is the strict solution to
\begin{equation}
\begin{cases}
\dot{\qb}(t)=(\A+{\F}_{\xb(t)}^\prime) \qb(t),\\
\qb(0)=\tilde{\xb}_0.
\end{cases}
\end{equation}
\end{proposition}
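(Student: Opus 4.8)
The plan is to establish Gâteaux differentiability of the solution map with respect to the initial condition by mimicking the structure of Proposition~\ref{prop-diff}, part~(a), since the two arguments differ only in \emph{where} the perturbation enters the equation: there it entered through the input operator $\B$, here it enters through the initial datum. First I would fix $\xb_0$ in the interior of $B_{\V}(R_2)$ and a direction $\tilde{\xb}_0\in\V$, choose $\epsilon$ small enough that $\xb_0+\epsilon\tilde{\xb}_0\in B_{\V}(R_2)$, and set $\xb_\epsilon(t)=\mc S(\ub(t),\rb,\xb_0+\epsilon\tilde{\xb}_0)$. The candidate derivative $\qb(t)$ solves the homogeneous linearized system with initial value $\tilde{\xb}_0$, which has a unique strict solution in $\W(0,\tau)$ by \Cref{lem-estimate} (applied with $\bm g=0$ and $\mc P(t)={\F}_{\xb(t)}^\prime$, whose boundedness follows as in \eqref{d1}). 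I would then define the error $\be(t)=(\xb_\epsilon(t)-\xb(t))/\epsilon-\qb(t)$ and show $\lim_{\epsilon\to 0}\norm{\be}{\W(0,\tau)}=0$.

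The key computation is to write down the IVP satisfied by $\be(t)$. Subtracting the equations for $\xb_\epsilon$ and $\xb$, dividing by $\epsilon$, and subtracting the equation for $\qb$, one finds that $\be$ satisfies
\begin{equation*}
\begin{cases}
\dot{\be}(t)=(\A+{\F}_{\xb(t)}^\prime)\be(t)+\bm{\be}_{\F}(t),\\
\be(0)=0,
\end{cases}
\end{equation*}
where $\bm{\be}_{\F}(t)$ is exactly the Taylor-remainder term from \eqref{eF}, namely
$$\bm{\be}_{\F}(t)=\tfrac{1}{\epsilon}\bigl(\F(\xb_\epsilon(t))-\F(\xb(t))-{\F}_{\xb(t)}^\prime(\xb_\epsilon(t)-\xb(t))\bigr).$$
Crucially, because the perturbation is now only in the initial condition and not in any forcing term, there is no $\bm{\be}_{\B}$ contribution: the forcing of the $\be$-equation is purely the nonlinear remainder. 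This makes the present case structurally \emph{simpler} than Proposition~\ref{prop-diff}.

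The main obstacle, as in part~(a), is controlling $\bm{\be}_{\F}(t)$ and passing to the limit. I would first need an a~priori Lipschitz-type bound $\norm{\xb_\epsilon(t)-\xb(t)}{\V}\le C\epsilon\norm{\tilde{\xb}_0}{\V}$; this follows by the same \Cref{lem-clement}/Gronwall argument used to derive \eqref{eq-9}, except that the driving term is now $\norm{\xb_0-(\xb_0+\epsilon\tilde{\xb}_0)}{D_{\A}(1/p,p)}=\epsilon\norm{\tilde{\xb}_0}{D_{\A}(1/p,p)}$ appearing through the initial-data term in \Cref{lem-clement}, rather than the $\B$-difference. Gâteaux differentiability of $\F$ (assumption A4) gives pointwise $\normm{\bm{\be}_{\F}(t)}\to 0$ for each $t$, while the uniform bound $\sup_t\norm{{\F}_{\xb(t)}^\prime}{\mc L(\V,\ss)}\le M_{\F}$ together with the $\V$-estimate yields a dominating bound of the form $\normm{\bm{\be}_{\F}(t)}\le (L_{\F}+M_{\F})C\norm{\tilde{\xb}_0}{\V}$ uniformly in $t$ and $\epsilon$, exactly as in \eqref{eq13}. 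The Bounded Convergence Theorem then gives $\int_0^\tau\normm{\bm{\be}_{\F}(t)}^2dt\to 0$, and a final application of \Cref{lem-estimate} to the $\be$-equation yields $\norm{\be}{\W(0,\tau)}\to 0$, completing the proof. The only point requiring care is verifying that the interior assumption on $\xb_0$ is genuinely used only to ensure $\xb_0+\epsilon\tilde{\xb}_0$ remains admissible for small $\epsilon$ of either sign, so that the two-sided limit defining the Gâteaux derivative is legitimate.
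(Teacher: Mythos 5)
Your proposal follows essentially the same route as the paper's proof: perturb the initial datum, form the error $\be$ whose only forcing is the Taylor remainder $\bm{\be}_{\F}$, obtain the a priori bound $\norm{\xb_\epsilon(t)-\xb(t)}{\V}\le C\epsilon\norm{\tilde{\xb}_0}{\V}$ via the maximal-regularity estimate (with the initial-data term) and Gronwall, and conclude with the Bounded Convergence Theorem and \Cref{lem-estimate}. Your sign convention $\be=(\xb_\epsilon-\xb)/\epsilon-\qb$ is in fact the one that makes $\be(0)=0$, which the paper's stated definition $(\xb-\xb_\epsilon)/\epsilon-\qb$ does not quite achieve, so this is a correct (and slightly cleaner) rendering of the same argument.
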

\begin{proof}
Let the number $\epsilon>0$ be small enough such that $\xb_0+\epsilon\tilde{\xb}_0\in B_{\V}(R_2)$. Define $\xb_\epsilon(t):=\mc S(\ub(t),\rb,\xb_0+\epsilon\tilde{\xb}_0)$, it solves
\begin{equation}
\begin{cases}
\dot{\xb}_\epsilon(t)=\mc{A}\xb_\epsilon(t)+\mc{F}(\xb_\epsilon(t))+\mc{B}(\rb)\ub(t),\quad t>0,\\
 \xb_\epsilon(0)=\xb_0+\epsilon\tilde{\xb}_0.
\end{cases}
\end{equation}
Define $\bm{\be}_{\F}(t)$ as 
\begin{equation}\notag
\bm{\be}_{\F}(t)\coloneqq \frac{1}{\epsilon}\left(\F(\xb(t))-\F(\xb_\epsilon(t))-{\F}_{\xb(t)}^\prime(\xb(t)-\xb_\epsilon(t))\right).
\end{equation}
Let $\be(t)=(\xb(t)-\xb_\epsilon(t))/\epsilon-\qb(t)$, it satisfies 
\begin{equation}\label{edot2}
\begin{cases}
\dot{\be}(t)=(\mc{A}+{\F}_{\xb(t)}^\prime)\be(t)+\bm{\be}_{\F}(t),\\
\be(0)=0.
\end{cases}
\end{equation}
Assumption A4 ensures that as $\epsilon\to 0$
\begin{equation}
\normm{\bm{\be}_{\F}(t)}\to 0, \quad \forall t\in [0,\tau].\label{e(t)3}\\
\end{equation}
The convergence in (\ref{e(t)3}) is uniform; to show this, note that $\xb(t)-\xb_\epsilon(t)$ satisfies 
\begin{equation}\notag
\begin{cases}
\dot{\xb}(t)-\dot{\xb}_\epsilon(t)=\A(\xb(t)-\xb_\epsilon(t))+\F(\xb(t))-\F(\xb_\epsilon(t)), \\
\xb(0)-\xb_\epsilon(0)=\epsilon \tilde{\xb}_0.
\end{cases}
\end{equation}
According to \cite[Proposition 2.2]{clement1993abstract}, there is $d_\tau$ depending only on $\tau$ such that for all $t\in [0,\tau]$
\begin{flalign}\label{eq14}
\norm{\dot{\xb}-\dot{\xb}_\epsilon}{L^2(0,t;\ss)}&+\norm{\A(\xb-\xb_\epsilon)}{L^2(0,t;\ss)}\\
&\le d_\tau \left(\norm{\F(\xb)-\F(\xb_\epsilon)}{L^2(0,t;\ss)} + \epsilon\norm{\tilde{\xb}_0}{\V}\right)\notag 
\end{flalign}
Also, letting $c_\tau$ be the embedding constant of $\W(0,\tau) \hookrightarrow C(0,\tau;\V)$, $\xb -\xb_\epsilon$ satisfies
\begin{flalign}\notag
\norm{\xb -\xb_\epsilon}{C(0,t;\V)}\le c_\tau &\left(\norm{\dot{\xb}-\dot{\xb}_\epsilon}{L^2(0,t;\ss)}\right. \notag \\
&\quad \left. +\norm{\A(\xb-\xb_\epsilon)}{L^2(0,t;\ss)}\right).\label{eq15}
\end{flalign}
\Cref{thm-existence} implies that the states $\xb(t)$ and $\xb_\epsilon(t)$ belong to some bounded set $D$; so let $L_\F$ be the Lipschitz constant $\F(\cdot)$ on $D$. Combining this with inequalities (\ref{eq14}) and (\ref{eq15}) yield
\begin{flalign}
\norm{\xb(t)-\xb_\epsilon(t)}{\V}^2\le& 2 c_\tau^2 d_\tau^2 L_\F^2 \int_0^t \norm{\xb(s)-\xb_\epsilon(s)}{\V}^2ds \notag \\
&+ 2 \epsilon^2 \norm{\tilde{\xb}_0}{\V}^2.
\end{flalign}
Applying Gronwall's lemma to this inequality yields
\begin{equation}\label{eq-14}
\norm{\xb(t)-\xb_\epsilon(t)}{\V}\le \sqrt{2} e^{c_\tau^2 d_\tau^2 L_\F^2} \epsilon \norm{\tilde{\xb}_0}{\V}.
\end{equation}
Take the norm of $\be_{\F}(t)$ in $\ss$, use (\ref{eq-14}), define $$M_{\F}:=\sup\{\|{\F}_{\xb(t)}^\prime\|_{\mc L(\V,\ss)}:t\in [0,\tau]\}.$$
It follows that
\begin{equation}\notag
\normm{\be_{\F}(t)}\le (L_{\F}+M_{\F})\sqrt{2} e^{c_\tau^2 d_\tau^2 L_\F^2} \norm{\tilde{\xb}_0}{\V} <\infty.
\end{equation}
The Bounded Convergence Theorem now ensures that
\begin{equation}\label{eq-15}
\lim_{\epsilon\to 0}\int_0^\tau \norm{\be_{\F}(t)}{\V}^2dt = 0.
\end{equation}
\Cref{lem-estimate} together with (\ref{eq-15}) gives
\begin{equation}
\lim_{\epsilon\to 0} \norm{\be}{\W(0,\tau)}=0.
\end{equation}
This shows that $\mc S(\ub,\rb,\xb_0)$ is G\^ateaux differentiable at $\xb_0$ in the direction $\tilde{\xb}_0$.
\end{proof}

\begin{theorem}\label{thm-worst ini}
Suppose assumptions A1-A4 hold, and identify the derivative $J'_{\xb}$ by element $\bm j_\xb\in \W(0,\tau)^*$. Let $\ub\in U_{ad}$, $\rb \in K_{ad}$, and $\xb=\mc S(\ub,\rb;\xb_0)$. Also, let $\pb(t)$, the adjoint trajectory state, satisfy
\begin{flalign}\label{adj2}
\begin{cases}
\dot{\pb}(t)=-(\A+{\F}_{\xb(t)}^\prime)\pb(t)-\bm j_\xb(t), \quad t\in [0,\tau),\\
\pb(\tau)=0.
\end{cases}
\end{flalign}
If $\xb_0$ is a worst initial condition over $B_{\V}(R_2)$, then, there is a non-negative number $\mu$ such that 
\begin{flalign}\label{opt x0}
\begin{cases}
\mu \left(\normm{\xb_0}-R_2 \right)=0,\\
\pb(0)+\mu \xb_0=0.\\
\end{cases}
\end{flalign}
\end{theorem}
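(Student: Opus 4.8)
The plan is to read (P1) as the maximization of a single smooth functional subject to one inequality constraint, and to obtain the first-order (KKT) conditions with the ``gradient'' of $\mc G$ represented through the adjoint state $\pb$.

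First I would differentiate $\mc G$. By the chain rule and \Cref{prop-x0}, for an admissible direction $\tilde\xb_0$ one has $\mc G'_{\xb_0}\tilde\xb_0 = J'_{\xb}\mc S'_{\xb_0}\tilde\xb_0 = \inn{\bm j_\xb}{\qb}_{L^2(0,\tau;\ss)}$, where $\qb = \mc S'_{\xb_0}\tilde\xb_0$ is the strict solution of the sensitivity equation of \Cref{prop-x0}, namely $\dot\qb = (\A + \F'_{\xb})\qb$ with $\qb(0)=\tilde\xb_0$. Next I would eliminate $\bm j_\xb$ in favour of $\pb$. Substituting $\bm j_\xb$ from the final-value problem \eqref{adj2} into the pairing and integrating by parts in time, the terms carrying $\A + \F'_{\xb}$ cancel against the dynamics of $\qb$; the terminal boundary term vanishes since $\pb(\tau)=0$; and only the initial boundary term survives, giving $\mc G'_{\xb_0}\tilde\xb_0 = \inn{\pb(0)}{\tilde\xb_0}$. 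Hence the Riesz representative of $\mc G'_{\xb_0}$ is precisely $\pb(0)$. This is the same adjoint manipulation as in the proof of \Cref{thm-optimality}, now with the sensitivity equation initialised at $\tilde\xb_0$ rather than forced.

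Finally I would invoke first-order optimality for the constrained maximization. Writing the constraint as $g(\xb_0):=\half(\normm{\xb_0}^2 - R_2^2)\le 0$, which is Fr\'echet differentiable with derivative $\xb_0$, two cases arise. If a worst $\xb_0$ lies in the interior of $B_\V(R_2)$, stationarity forces $\mc G'_{\xb_0}=0$, i.e. $\pb(0)=0$, and $\mu=0$ satisfies \eqref{opt x0}. If $\xb_0$ lies on the boundary, the Lagrange/KKT multiplier rule for a single smooth inequality constraint produces a scalar $\mu\ge 0$ for which the stationarity condition $\pb(0)+\mu\xb_0=0$ and the complementary slackness $\mu(\normm{\xb_0}-R_2)=0$ hold, which is exactly \eqref{opt x0}.

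The main obstacle is the multiplier argument at the boundary: a worst initial condition will in general activate the constraint, and there \Cref{prop-x0} only guarantees differentiability along feasible (inward) directions, so the delicate step is to pass from the variational inequality $\mc G'_{\xb_0}(\yb-\xb_0)\le 0$ over the convex ball $B_\V(R_2)$ to the existence, non-negativity, and complementary slackness of the single scalar $\mu$, verifying a constraint qualification so that the full KKT system holds rather than a degenerate Fritz--John form.
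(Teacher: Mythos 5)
Your proposal is correct and follows essentially the same route as the paper: the gradient of $\mc G$ is represented as $\pb(0)$ via the sensitivity equation of \Cref{prop-x0} and integration by parts against the adjoint system \eqref{adj2}, and the multiplier $\mu$ is then obtained from the KKT conditions for the single smooth constraint $\half(\normm{\xb_0}^2_{\V}-R_2^2)\le 0$, whose derivative is identified with $\xb_0$. The constraint-qualification issue you flag is exactly what the paper handles by invoking Robinson's regularity condition together with Theorem 1.56 of Hinze et al., after which the variational inequality over all of $\V$ collapses to the equality $\pb(0)+\mu\xb_0=0$.
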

\begin{proof}
Define $f(\xb_0):=\frac{1}{2}(\normm{\xb_0}^2_{\V}-R_2^2).$ Rewrite \eqref{P1} as
\begin{equation}
\begin{cases}
\displaystyle\max&\mc G(\xb_0)\\
\text{s.t.} &f(\xb_0)\le 0.
\end{cases} \label{P2}
\end{equation} 
The constraint $f(\xb_0)\le 0$ satisfies Robinson's regularity condition \cite[Section 1.7.3.2]{hinze2008optimization}. This allows one to apply \cite[Theorem 1.56]{hinze2008optimization}. Let $\lambda$ be a non-negative number, and define the Lagrangian 
\begin{equation}
\mf L(\xb_0,\lambda):=\mc G(\xb_0,\lambda) + \mu f(\xb_0).
\end{equation}
Let $\mf L'_{\xb_0}:\V\to \R$ be the G\^ateaux derivative of $\mf L(\xb_0,\lambda)$ at $\xb_0$. Identify $\mf L'_{\xb_0}$ with an element $l_{\xb_0}\in \V$. Theorem 1.56 of \cite{hinze2008optimization} ensures that the worst initial condition satisfies for all $\tilde{\xb}_0\in \V$ the conditions
\begin{subequations}\label{optim}
\begin{flalign}
f(\xb_0)&\le 0,\\
\mu &\ge 0,\\
\mu f(\xb_0)&=0,\label{slackness}\\
\inn{l_{\xb_0}}{\tilde{\xb}_0-\xb_0}_\V&\ge 0.\label{ineq optim}
\end{flalign}
\end{subequations}

In the following, an explicit expression for $l_{\xb_0}$ will be derived. First, the G\^ateaux derivative of $f(\xb_0)$ at $\xb_0$ along $\tilde{\xb}_0$ is
\begin{equation}
f'_{\xb_0}\tilde{\xb}_0=\inn{\tilde{\xb}_0}{\xb_0}_{\V}.\label{der1}
\end{equation}
This implies that the functional $f'_{\xb_0}:\V \to \R$ can be identified with the element $\xb_0$. The G\^ateaux derivative of $\mc G(\xb_0)$ at $\xb_0$ along $\tilde{\xb}_0$ is derived using the chain rule,
\begin{equation}\label{opt2}
\mc G'_{\xb_0}\tilde{\xb}_0=J'_\xb \mc{S}'_{\xb_0}\tilde{\xb}_0.
\end{equation}
The functionals $\mc G'_{\xb_0}:\V\to \R$ and $J'_\xb:L^2(0,\tau;\ss)\to \R$ can be identified with some elements $\bm g_{\xb_0}\in \V$ and $\bm j_{\xb}\in L^2(0,\tau;\ss)$, respectively. Then, equality \eqref{opt2} implies that
\begin{equation}
\bm g_{\xb_0}=\mc{S}'^*_{\xb_0}\bm j_\xb(t).
\end{equation}
The adjoint operator $\mc{S}'^*_{\xb_0}$ will be derived. Let $\mc{S}'_{\xb_0}\tilde{\xb}_0=\qb(t)$. Consider the inner-product
\begin{flalign}
&\inn{\bm j_\xb}{\mc{S}'_{\xb_0}\tilde{\xb}_0}_{L^2(0,\tau;\ss)}=\int_0^\tau\inn{\bm j_{\xb}(t)}{\qb(t)}dt\notag \\
&=\int_0^\tau\inn{-\dot{\pb}(t)-(\mc{A}+{\F}_{\xb(t)}^\prime)\pb(t)}{\qb(t)}dt.
\end{flalign}
Using \Cref{prop-x0} and applying integration by parts yield
\begin{flalign}
&\inn{\bm j_\xb}{\mc{S}'_{\xb_0}\tilde{\xb}_0}_{L^2(0,\tau;\ss)}= \inn{\pb(0)}{\qb(0)}_{\V}-\inn{\pb(\tau)}{\qb(\tau)}_{\V}\notag \\
&\quad +\int_0^\tau \inn{\pb(t)}{\dot{\qb}(t)-(\mc{A}+{\F}_{\xb(t)}^\prime)\qb(t)}dt\notag\\
& =\inn{\pb(0)}{\tilde{\xb}_0}_{\V}. \label{eq-16}
\end{flalign}
It follows that $\mc{S}'^*_{\xb_0}\bm j_\xb=\pb(0)$, and so
\begin{equation}
\bm g_{\xb_0}=\pb(0).\label{der2}
\end{equation} 
Combining \eqref{der1} and \eqref{der2} yield
\begin{equation}
l_{\xb_0}=\pb(0)+\mu \xb_0.
\end{equation} 
Substituting this in \eqref{optim} yields
\begin{equation}\label{ineq optim 2}
\inn{\pb(0)+\mu \xb_0}{\tilde{\xb}_0-\xb_0}_\V\ge 0, \quad \forall \tilde{\xb}_0\in \V.
\end{equation}
Since $\tilde{\xb}_0\in \V$ is arbitrary, the inequality condition \eqref{ineq optim 2} becomes an equality condition. This together with \eqref{slackness} yields \eqref{opt x0}.
\end{proof}

For linear systems with quadratic cost, the adjoint trajectory state satisfies $\pb^o(t)=\Pi(t)\xb^o(t)$ where $\Pi(t)$ solves \eqref{ric}. Consequently, the optimality condition $\pb^o(0)+\mu \xb_0=0$ becomes
\begin{equation}
\Pi(0)\xb_0=-\mu \xb_0.
\end{equation}
This implies that the worst initial condition is an eigenfunction of the operator $\Pi(0)$.

\section{Kuramoto–Sivashinsky equation}
For every actuator location $r\in (0,1)$, let the function $b(\cdot;r)$ be in $C^1[0,1]$. Consider the controlled Kuramoto–Sivashinsky equation with Dirichlet boundary conditions and initial condition $w_0(\xi)$ on $\xi\in [0,1]$ and some number $\lambda$
\begin{flalign}\notag
\begin{cases}
\begin{aligned}
&\frac{\partial w}{\partial t} +  \frac{\partial^4 w}{\partial \xi^4} +\lambda \frac{\partial^2 w}{\partial \xi^2} + w\frac{\partial w}{\partial \xi} =b(\xi;r)u(t),\ \ &t&> 0,\\[2mm]
&w(0,t) = w(1,t)=0,\quad &t&\ge 0,\\[2mm]
&\frac{\partial w}{\partial \xi}(0,t) = \frac{\partial w}{\partial \xi}(1,t)=0,\quad &t&\ge 0,\\[2mm]
&w(\xi,0)= w_0(\xi), \quad &\xi&\in [0,1].
\end{aligned}
\end{cases}
\end{flalign}

Define the state $\xb(t)\coloneqq w(\cdot,t)$, the state space $\ss\coloneqq L^2(0,1)$.
Let the state operator $\A:D(\A)(\subset \ss)\to \ss$ be
\begin{flalign}
&\A w\coloneqq -w_{\xi\xi\xi\xi}-\lambda w_{\xi\xi},\notag\\
&D(\A)=H^4(0,1)\cap H^2_0(0,1).
\end{flalign}
Also, the control space is $\cs\coloneqq\R$. The actuator design space is $\as\coloneqq\R$. Define ${\V}\coloneqq H^1_0(0,1)$; the nonlinear operator $\F(\cdot):{\V}\to \ss$ and the input operator $\B(\cdot):\as\to \mc{L}(\cs,\ss)$ are defined as 
\begin{flalign}
\F(w)&\coloneqq -ww_\xi,\label{F-KS}\\
\B(r)u&\coloneqq b(\xi,r)u.
\end{flalign}
The state space representation of the model will then be (\ref{eq-IVP}).

The operator $\A:D(\A)\to \ss$ is a self-adjoint operator, is bounded from below, and has compact resolvent. According to Theorem \cite[Theorem 32.1]{sell2013dynamics}, $\A$ generates an analytic semigroup on $\ss$. Since the operator $\A$ is analytic on a Hilbert space, Theorem 4.1 in \cite{dore1993p} ensures that this operator enjoys maximal parabolic regularity. Also, by Rellich-Kondrachov compact embedding theorem \cite[Chapter 6]{adams2003sobolev}, the space $D(\A)$ is compactly embedded in $\ss$. The operator $\A$ is also associated with a form described in A2.
\begin{lemma}
The nonlinear operator $\F(\cdot)$ is G\^ateaux differentiable from ${\V}$ to $\ss$. The G\^ateaux derivative of $\F(\cdot)$ at $w$ in the direction $f$ is ${\F}^\prime_wf=-wf_\xi-w_\xi f$.
\end{lemma}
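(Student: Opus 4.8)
The plan is to verify the definition of G\^ateaux differentiability directly, by expanding the quadratic nonlinearity and isolating its first-order part. First I would fix $w,f\in\V=H^1_0(0,1)$ and a real parameter $\epsilon$, and compute
$$\F(w+\epsilon f)=-(w+\epsilon f)(w_\xi+\epsilon f_\xi)=-w w_\xi-\epsilon(w f_\xi+w_\xi f)-\epsilon^2 f f_\xi.$$
Since $\F(w)=-w w_\xi$, subtraction gives $\F(w+\epsilon f)-\F(w)-\epsilon(-w f_\xi-w_\xi f)=-\epsilon^2 f f_\xi$, so the candidate derivative $\F'_w f=-w f_\xi-w_\xi f$ captures the entire first-order term and the remainder is the single quadratic expression $-\epsilon^2 f f_\xi$. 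This algebraic step is elementary; the substance lies in the functional-analytic estimates on the products.

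The key tool is the one-dimensional Sobolev embedding $H^1_0(0,1)\hookrightarrow C[0,1]\subset L^\infty(0,1)$, which furnishes a constant $c$ with $\norm{g}{L^\infty}\le c\norm{g}{\V}$ for every $g\in\V$. Using this together with $f_\xi,w_\xi\in L^2(0,1)$, I would bound $\norm{w f_\xi}{\ss}\le\norm{w}{L^\infty}\norm{f_\xi}{L^2}\le c\norm{w}{\V}\norm{f}{\V}$ and likewise $\norm{w_\xi f}{\ss}\le c\norm{w}{\V}\norm{f}{\V}$. Linearity of $f\mapsto\F'_w f$ is immediate, and these estimates show it is bounded from $\V$ into $\ss$, i.e. $\F'_w\in\mc L(\V,\ss)$, which is exactly the regularity the definition demands of the derivative operator. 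The same embedding gives $\norm{f f_\xi}{\ss}\le c\norm{f}{\V}^2<\infty$, confirming that the remainder genuinely lies in $\ss$.

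Finally I would feed the remainder bound into the limit. Taking norms in $\ss$ yields $\norm{\F(w+\epsilon f)-\F(w)-\epsilon\F'_w f}{\ss}=\epsilon^2\norm{f f_\xi}{\ss}\le c\,\epsilon^2\norm{f}{\V}^2$, so the associated difference quotient satisfies $\frac{1}{|\epsilon|}\norm{\F(w+\epsilon f)-\F(w)-\epsilon\F'_w f}{\ss}\le c\,|\epsilon|\,\norm{f}{\V}^2\to 0$ as $\epsilon\to 0$. This establishes both the convergence required in the definition and the explicit form $\F'_w f=-w f_\xi-w_\xi f$.

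I do not expect a deep obstacle here: the computation is bilinear and the convergence is even $O(\epsilon^2)$, which is in fact strong enough to give Fr\'echet differentiability, although only the weaker G\^ateaux statement is asserted. The one point requiring genuine care is the justification that the quadratic nonlinearity maps $\V$ into $\ss$ with the stated bounds, and this rests entirely on the $H^1$-into-$L^\infty$ embedding available in one space dimension; in higher dimensions this step would fail for $H^1$ data and is the reason the embedding hypothesis $D_{\A}(1/p,p)\hookrightarrow\V\hookrightarrow\ss$ is needed in the abstract theory.
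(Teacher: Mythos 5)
Your proof is correct and follows essentially the same route as the paper: expand the quadratic nonlinearity so the remainder is the single term $-\epsilon^2 f f_\xi$, and use the one-dimensional embedding $H^1_0(0,1)\hookrightarrow C[0,1]$ to conclude that $ff_\xi\in L^2(0,1)$ and the scaled remainder vanishes as $\epsilon\to 0$. The only difference is that you additionally verify the candidate derivative is a bounded operator in $\mc L(\V,\ss)$ (which the paper's definition of G\^ateaux differentiability formally requires but its proof leaves implicit), and you observe the $O(\epsilon^2)$ rate in fact yields Fr\'echet differentiability; both are welcome refinements rather than a different argument.
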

\begin{proof}
The operator ${\F}^\prime_w$, if exists, needs to satisfy 
\begin{equation}
\lim_{\epsilon\to 0}\norm{\frac{1}{\epsilon}\left(\F(w+\epsilon f)-\F(w)\right)-{\F}^\prime_wf}{L^2}=0.
\end{equation}
Substituting in (\ref{F-KS}), inside the limit becomes
\begin{flalign}
&\norm{\frac{1}{\epsilon}(ww_{\xi}-(w+\epsilon f)(w_{\xi}+\epsilon f_{\xi}))-wf_\xi-w_\xi f}{L^2}\notag \\
&\qquad=\norm{\epsilon f f_{\xi}}{L^2}.
\end{flalign}
Note that $f\in H^1_0(0,1)$. Embedding $H^1_0(0,1)\hookrightarrow C[0,1]$ means that $f$ is a continuous function over $[0,1]$. This implies that $ff_{\xi}$ is in $L^2(0,1)$, thus
\begin{equation}
\lim_{\epsilon\to 0}\epsilon\norm{f f_{\xi}}{L^2}=0.
\end{equation}
The lemma now follows from the uniqueness of G\^ateaux derivative.
\end{proof}
Note that $D_{\A}(1/2,2)=H_0^2(0,1)\hookrightarrow {\V}$ (see \cite[Corollary 4.10]{chandler2015interpolation}). The operator $\F(\cdot):{\V}\to \ss$ is not however weakly continuous, and does not satisfy assumption B1 of \cite{edalatzadehSICON}.

For all functions $f$ and $w$ in $H^1_0(0,1)$ and $g$ in $H^1(0,1)$, the adjoint of ${\F}_w^\prime$ satisfies
\begin{equation}
\inn{f}{{\F_w^\prime}^*g}_{L^2}=\inn{{\F}_w^\prime f}{g}_{L^2}=\int_0^1 (-wf_{\xi}-w_{\xi}f)g d\xi .
\end{equation}
Performing integration by parts yields
\begin{flalign}
\int_0^1 (-wf_{\xi}-w_{\xi}f)g d\xi=-\int_0^1 wg_{\xi}fd\xi.
\end{flalign}
The operator ${\F_w^\prime}^*$ maps $D({\F_w^\prime}^*)=H^1(0,1)$ to $L^2(0,1)$ as follows
\begin{equation}
{\F_w^\prime}^*g=-wg_{\xi}.
\end{equation}
In addition,
\begin{flalign}
\B^*(r)w&=\int_{0}^{1}b(\xi,r)w(\xi)d\xi, &\forall& w\in \V,\quad\\
(\B'_ru)^*f&=u\int_{0}^{1} b_r(\xi;r)f(\xi) d\xi, &\forall& f\in {\V}.\quad        
\end{flalign}
Also, define 
\begin{equation}
K_{ad}:=\left\{r\in[a,b]:  0<a<b<1\right\}.
\end{equation}

Global stability of an uncontrolled KS equation has been studied extensively, see e.g. \cite{al2018linearized,liu2001stability,fantuzzi2016,ahmadi2016}. Theorem 2.1 of \cite{liu2001stability} proves that for $\lambda< 4\pi^2$, the uncontrolled KS equation is globally exponentially stable. Proof of this theorem can be modified to ensure that there is solution to the controlled KS equation over $[0,\tau]$ for all initial conditions in $\V$. The following lemma ensures that for some parameters $\lambda$ there is a solution to the KS equation for all initial conditions and inputs over arbitrary time intervals.

\begin{lemma}\label{lem-KS}
Let $\lambda<4\pi^2$ and $\sigma(\lambda)$ be the smallest eigenvalue of $-\A$. For all initial conditions $w_0\in \V$ and inputs $u\in L^2(0,\tau)$, the strict solution to the KS system satisfies
\begin{equation}\notag
\normm{w(\tau)}^2\le \normm{w_0}^2+\frac{1}{\sigma(\lambda)} \norm{u}{L^2(0,\tau)}^2\max_{\xi\in[0,1]}b^2(\xi;r).
\end{equation} 
\end{lemma}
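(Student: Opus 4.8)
The plan is to use the standard energy method, testing the equation against the solution itself in $\ss = L^2(0,1)$. Since the strict solution lies in $\W(0,\tau) = W^{1,2}(0,\tau;\ss)\cap L^2(0,\tau;D(\A))$, the map $t\mapsto \normm{w(t)}^2$ is absolutely continuous with $\frac{d}{dt}\normm{w(t)}^2 = 2\inn{\dot{w}(t)}{w(t)}$ for almost every $t$; this is the one regularity fact I would invoke at the outset. Substituting $\dot{w}=\A w-w w_\xi+b(\cdot;r)u$ gives
\begin{equation*}
\frac{1}{2}\frac{d}{dt}\normm{w}^2 = \inn{\A w}{w} - \inn{w w_\xi}{w} + u(t)\inn{b(\cdot;r)}{w}.
\end{equation*}

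First I would dispose of the nonlinear term by integration by parts: because $w(0,t)=w(1,t)=0$,
\begin{equation*}
\inn{w w_\xi}{w} = \int_0^1 w^2 w_\xi\,d\xi = \frac{1}{3}\left[w^3\right]_0^1 = 0,
\end{equation*}
so the transport nonlinearity makes no contribution to the energy balance. This is the conceptual crux of the estimate: the nonlinearity is energy-neutral, which is precisely why a clean a priori bound is available in which the input enters additively and the initial data enters only through its own norm.

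Next I would use self-adjointness of $\A$ together with the hypothesis that $\sigma(\lambda)$ is the smallest eigenvalue of $-\A$. By the Rayleigh quotient characterization, $\inn{-\A w}{w}\ge \sigma(\lambda)\normm{w}^2$ for all $w\in D(\A)$, hence $\inn{\A w}{w}\le -\sigma(\lambda)\normm{w}^2$. The assumption $\lambda<4\pi^2$ is what guarantees $\sigma(\lambda)>0$ (via $\inn{-\A w}{w}=\normm{w_{\xi\xi}}^2-\lambda\normm{w_\xi}^2$ and the clamped-end inequality $\normm{w_{\xi\xi}}^2\ge 4\pi^2\normm{w_\xi}^2$ on $H^2_0(0,1)$); I would mention this only to explain why the bound is meaningful. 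For the input term I would use Cauchy--Schwarz and then Young's inequality in the deliberately non-sharp form
\begin{equation*}
2u\inn{b(\cdot;r)}{w} \le \sigma(\lambda)\normm{w}^2 + \frac{1}{\sigma(\lambda)} u^2 \norm{b(\cdot;r)}{L^2(0,1)}^2,
\end{equation*}
and then bound $\norm{b(\cdot;r)}{L^2(0,1)}^2 \le \max_{\xi\in[0,1]} b^2(\xi;r)$ since the spatial domain has unit length.

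Combining these, multiplying the energy identity by two, and cancelling $\sigma(\lambda)\normm{w}^2$ leaves
\begin{equation*}
\frac{d}{dt}\normm{w}^2 \le -\sigma(\lambda)\normm{w}^2 + \frac{1}{\sigma(\lambda)}u^2\max_{\xi\in[0,1]}b^2(\xi;r) \le \frac{1}{\sigma(\lambda)}u^2\max_{\xi\in[0,1]}b^2(\xi;r),
\end{equation*}
where the final negative term is simply discarded. Integrating over $[0,\tau]$ then produces the claimed inequality. The only genuine obstacle is the justification at the very start --- that a strict solution has enough regularity to differentiate $\normm{w(t)}^2$ and to perform the integrations by parts pointwise in $t$ --- but this follows from $w\in\W(0,\tau)$ and its embedding into $C(0,\tau;\V)$. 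Everything after that is a routine chain of inequalities, and it is precisely the loose choice of Young constant that yields the factor $1/\sigma(\lambda)$ rather than the sharper $1/(2\sigma(\lambda))$.
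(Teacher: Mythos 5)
Your proof is correct and follows essentially the same route as the paper: an energy estimate on $E(t)=\normm{w(t)}^2$, with the nonlinear term vanishing and the linear part contributing $-2\sigma(\lambda)E(t)$ (the paper cites Lemma 3.1 of Liu and Krstic for exactly the step you carry out by hand), followed by Young's inequality with parameter $\sigma(\lambda)$ and integration over $[0,\tau]$. The only cosmetic difference is that you make the dissipation inequality explicit rather than citing it.
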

\begin{proof}
\Cref{thm-existence} ensures that there is a solution $w\in \W(0,\tau)$ over $[0,\tau]$ to the KS system with initial condition $w_0\in \V$ and input $u\in L^2(0,\tau)$. Consider the Lyapunov function 
\begin{equation}
E(t):=\int_0^1 w^2(\xi,t)\, d\xi.
\end{equation}
Since $w\in W^{1,2}(0,\tau;\ss)$, the function $E(t)$ is differentiable. Taking the derivative of $E(t)$ and applying \cite[Lemma 3.1]{liu2001stability} yield
\begin{equation}\label{eq1}
\dot{E}(t)\le -2\sigma(\lambda) E(t)+2\int_0^1w(\xi,t)b(\xi;r)u(t)d\xi.
\end{equation}
Apply Young's inequality to the integral term, for every $\epsilon>0$, 
\begin{equation}
\dot{E}(t)\le (-2\sigma(\lambda)+\epsilon) E(t)+\frac{1}{\epsilon}\int_0^1b^2(\xi;r)u^2(t)d\xi.
\end{equation}
Let $\epsilon= \sigma(\lambda)$. Taking an integral over $[0,\tau]$ yields the desired inequality in the lemma.
\end{proof}

Since the KS system satisfies assumptions A1-A5, \Cref{cor} can be applied to obtain the optimality conditions. The cost function to be optimized is
\begin{equation}
J(\xb,\ub,\rb)=\int_0^\tau\int_0^1w^2(\xi,t)d\xi dt+\int_0^\tau u^2(t)dt.
\end{equation}
Letting $\pb(t)=f(\cdot,t)$, the optimizer $(u^o,r^o,w^o,f^o)$ with initial condition $w_0(\xi)\in H^1_0(0,1)$ satisfies
{\small\begin{flalign}\label{optimizers KSE}
&\begin{cases}\notag
\begin{aligned}
&\p{w^o}{t}{}+\p{w^o}{\xi}{4}+\lambda \p{w^o}{\xi}{2}+w^o\p{w^o}{\xi}{}=b(\xi;r^o)u^o(t), &t&>0\\[2mm]
&w^o(0,t)=w^o(1,t)=0, &t&>0\\[2mm]
&\p{w^o}{\xi}{}(0,t)=\p{w^o}{\xi}{}(1,t)=0, &t&>0\\[2mm]
&w^o(\xi,0)=w_0(\xi),&&
\end{aligned}
\end{cases}\\
&\begin{cases}\notag
\begin{aligned}
&\p{f^o}{t}{}-\p{f^o}{\xi}{4}-\lambda \p{f^o}{\xi}{2}-w^o\p{f^o}{\xi}{}=-w^o(\xi,t),  &t&>0\\[2mm]
&f^o(0,t)=f^o(1,t)=0, &t&>0\\[2mm]
&\p{f^o}{\xi}{}(0,t)=\p{f^o}{\xi}{}(1,t)=0, &t&>0\\[2mm]
&f^o(\xi,\tau)=0,&&
\end{aligned}
\end{cases}\\
&\begin{cases}\notag
\begin{aligned}
&u^o(t)=-\int_0^{1}b(\xi;r^o)f^o(\xi,t)\, d\xi,  \quad t>0,\\[2mm]
&\int_0^{\tau}\int_{0}^{1}u^o(t)b_{r}(\xi;r^o)f^o(\xi,t)\,d\xi dt=0.
\end{aligned}
\end{cases}
\end{flalign}}
 
The worst initial condition over a unit ball satisfies 
\begin{equation}
\begin{cases}
\mu\left( \norm{{w_0}}{H_0^1(0,1)}- 1 \right)=0,\\[2mm]
f^o(\xi,0)+\mu{w}_0(\xi)= 0.
\end{cases}
\end{equation}

\section{Nonlinear Diffusion}
Consider the transfer of heat in a bounded, open, connected set $\Omega\subset \mathbb{R}^2$. It is assumed that $\Omega$ has a Lipschitz boundary  separated into $\partial \Omega=\overline{\Gamma_0\cup\Gamma_1}$ where  $\Gamma_0\cap\Gamma_1=\emptyset$ and $\Gamma_0\neq \emptyset$. Denote by $\nu$  the unit outward normal vector field on $\partial \Omega$.
The  class of nonlinear heat transfer models is, for actuator shape $r \in C^1 (\overline{\Omega} ) ,$
\begin{equation}\small\notag
\begin{cases}
\begin{aligned}
&\p{w}{t}{}(\xi,t)=\\
&\qquad\Delta w(\xi,t)+F(w(\xi,t))+r(\xi)u(t),  &(\xi,t)&\in \Omega\times (0,\tau],\\
&w(\xi,t)=0,  &(\xi,t)&\in \Gamma_0\times [0,\tau],\\
&\p{w}{\nu}{}(\xi,t)=0,  &(\xi,t)&\in \Gamma_1\times [0,\tau],\\
&w(\xi,0)=w_0(\xi), &\xi &\in \Omega.
\end{aligned}
\end{cases}
\end{equation}

Defining $\as=L^2(\Omega),$ 
a set of  admissible actuator shapes is
$$K_{ad}  = \{ r \in C^1 (\overline{\Omega} ) :  \norm{r}{C^1 (\overline{\Omega})}\leq 1 \} . $$ 
The set $K_{ad}$ is compact in $\as$ with respect to the norm topology \cite[Chapter 6]{adams2003sobolev}. 

Let $\ss:= L^2(\Omega)$, $\cs\coloneqq\R$, and the state $\xb(t):=w(\cdot,t)$. The operator $\A:D(\A)\to \ss$ is defined as
\begin{subequations}
\begin{gather}
\A w=\Delta w,\\
D(\A)=\left\{w\in H^2(\Omega)\cap H^1_{\Gamma_0}:  \p{w}{\nu}{}=0 \text{ on }\Gamma_1\right\}.
\end{gather}
\end{subequations}
The operator $\A$  self-adjoint, non-negative and has compact resolvent. Thus, it generates an analytic semi-group on the Hilbert space $L^2(\Omega)$ \cite[Theorem 32.1]{sell2013dynamics}, and  has maximal $L^p$ regularity.

Define $\V=H^1_{\Gamma_0}(\Omega)$ and assume that  the nonlinear operator $\F(\cdot) : \V \to \ss .$
The proof of the following lemma is  the same as that of  \cite[Lemma 7.1.1]{edalatzadehSICON}.
\begin{lemma}\label{lem-heat-F}
Let $\V=H^1_{\Gamma_0}(\Omega)$. Assume that
\begin{enumerate}
\item $F(\zeta)$ is twice continuously differentiable over $\mathbb{R}$; denote its derivatives by $\F^\prime(\zeta)$ and $F^{\prime\prime} (\zeta)$;
\item \label{C2}there are numbers $a_0>0$ and $b>1/2$ such that $|F^{\prime \prime}(\zeta)|\le a_0(1+|\zeta|^b)$.
\end{enumerate}
Then $\F(\cdot)$ is G\^ateaux differentiable from $\V$ to $\ss$. The G\^ateaux derivative of $\F(\cdot)$ at $w(\xi)$ in the direction $f(\xi)$ is ${\F^\prime}_wf=F^\prime(w)f$.
\end{lemma}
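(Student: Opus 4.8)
The plan is to verify directly the two requirements in the definition of the G\^ateaux derivative: first that the candidate map ${\F^\prime}_w : f \mapsto F'(w)f$ is a bounded linear operator from $\V$ into $\ss$, and then that the defining difference quotient converges to it in $\ss=L^2(\Omega)$. Throughout I will exploit that, since $\Omega\subset\mathbb{R}^2$, the Sobolev embedding $H^1(\Omega)\hookrightarrow L^q(\Omega)$ holds for \emph{every} finite $q$; this is the mechanism that converts the polynomial growth hypothesis $|F''(\zeta)|\le a_0(1+|\zeta|^b)$ into the integrability I need, and it is why the two-dimensional setting makes the argument comfortable.

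First I would record the consequences of the growth bound. Integrating $|F''(\zeta)|\le a_0(1+|\zeta|^b)$ once and twice yields constants $C_1,C_2$ with $|F'(\zeta)|\le C_1(1+|\zeta|^{b+1})$ and $|F(\zeta)|\le C_2(1+|\zeta|^{b+2})$, which re-confirms that $\F$ maps $\V$ into $\ss$. Linearity of $f\mapsto F'(w)f$ is immediate. For boundedness I would estimate $\norm{F'(w)f}{L^2}^2\le C\int_\Omega (1+|w|^{2b+2})|f|^2\,d\xi$ and split the integral: the term $\int_\Omega|f|^2$ is bounded by $\norm{f}{\V}^2$, and $\int_\Omega|w|^{2b+2}|f|^2\,d\xi$ is handled by H\"older's inequality, bounding each factor by a finite $L^q$ norm via the two-dimensional embedding. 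This gives $\norm{{\F^\prime}_w f}{L^2}\le C(\norm{w}{\V})\norm{f}{\V}$, so indeed ${\F^\prime}_w\in\mc L(\V,\ss)$.

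Next I would expand pointwise. For fixed $w,f\in\V$ and $\epsilon\in(0,1]$, Taylor's theorem with integral remainder gives, for a.e.\ $\xi$,
\[
F(w+\epsilon f)-F(w)-\epsilon F'(w)f=\epsilon^2 f^2\int_0^1 (1-s)F''(w+s\epsilon f)\,ds .
\]
Using the integral form avoids any measurable-selection issue. Since $|w+s\epsilon f|\le |w|+|f|$ pointwise for $\epsilon\le 1$, the growth bound gives $\big|\int_0^1(1-s)F''(w+s\epsilon f)\,ds\big|\le \tfrac{a_0}{2}\big(1+(|w|+|f|)^b\big)$, so after dividing by $\epsilon$ the expression inside the defining limit is dominated pointwise by $\tfrac{a_0}{2}\epsilon\big(1+(|w|+|f|)^b\big)|f|^2$.

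The main step, and the only real obstacle, is to show that $\norm{\big(1+(|w|+|f|)^b\big)f^2}{L^2}$ is finite (it is automatically independent of $\epsilon$). Squaring, this is bounded by $C\int_\Omega\big(1+(|w|+|f|)^{2b}\big)|f|^4\,d\xi$; the term $\int_\Omega|f|^4$ is finite because $f\in L^4(\Omega)$, while $\int_\Omega(|w|+|f|)^{2b}|f|^4\,d\xi$ is controlled by H\"older's inequality together with the fact that $w,f\in L^q(\Omega)$ for every finite $q$, which holds precisely because $\dim\Omega=2$. Denoting the resulting finite bound by $K(w,f)$, I obtain
\[
\norm{\tfrac{1}{\epsilon}\big(\F(w+\epsilon f)-\F(w)\big)-{\F^\prime}_w f}{L^2}\le \tfrac{a_0}{2}\,\epsilon\,K(w,f),
\]
which tends to $0$ as $\epsilon\to 0$. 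The claimed formula ${\F^\prime}_w f=F'(w)f$ then follows from uniqueness of the G\^ateaux derivative. I expect the H\"older bookkeeping to be the fussiest part, but there is no genuine difficulty, since every finite exponent is admissible in two dimensions and the growth exponent $b$ is therefore never critical.
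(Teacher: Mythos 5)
Your proof is correct. The paper itself does not spell out an argument for this lemma --- it only states that the proof is the same as that of Lemma~7.1.1 of the cited SICON paper --- so there is no in-text proof to compare against line by line; but your route (Taylor's theorem with integral remainder, the integrated growth bounds $|F'(\zeta)|\le C_1(1+|\zeta|^{b+1})$, and the two-dimensional embedding $H^1(\Omega)\hookrightarrow L^q(\Omega)$ for all finite $q$ combined with H\"older) is the standard Nemytskii-operator argument and is exactly the mechanism such a reference would use. Your version in fact delivers slightly more than asked: the bound $\bigl\|\tfrac{1}{\epsilon}(\F(w+\epsilon f)-\F(w))-F'(w)f\bigr\|_{L^2}\le \tfrac{a_0}{2}\,|\epsilon|\,K(w,f)$ gives a quantitative $O(\epsilon)$ rate rather than mere convergence. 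Two small points of bookkeeping: the paper's definition of the G\^ateaux derivative lets $\epsilon\to 0$ through all real values, so you should work with $|\epsilon|\le 1$ (your pointwise estimate $|w+s\epsilon f|\le|w|+|f|$ survives unchanged); and your argument never uses the hypothesis $b>1/2$ --- any $b\ge 0$ works in two dimensions --- which is worth noting only because it shows that assumption is not the load-bearing one here.
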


It is straightforward to show that the operator $\F_w^\prime:\V(\subset \ss)\to \ss$ is self-adjoint, i.e.,
\begin{equation}
\inn{{\F_w^\prime}^*g}{f}=\inn{g}{{\F_w^\prime}f}, \quad \forall f,g \in \V. 
\end{equation}

Define $\cs =\mathbb{R}$ and the input operator $\B(r)\in \mc{L}(\cs ,\ss)$ maps $u$ to $r(\xi)u$.
Also, for all $f$ in $\ss$
\begin{flalign}
\B^*(r)f&=\int_{\Omega}r(\xi)f(\xi)d \xi,\\
(\B'_ru)^*f&=uf. 
\end{flalign}

For every initial condition in $\V$, a strict solution over $[0,\tau]$ to the nonlinear heat equation is not guaranteed. The following lemma states a condition under which there is a solution to the diffusion equation for all initial conditions and inputs over arbitrary time intervals.
\begin{lemma}\label{lem-heat}
If the function $F(\zeta)$ satisfies $\zeta F(\zeta)\le 0$ for all $\zeta\in \mathbb{R}$, then there is $c_\Omega>0$ such that the strict solution to the nonlinear heat equation satisfies
\begin{equation}\notag
\normm{w(\tau)}^2\le \normm{w_0}^2+\frac{4}{c_\Omega} \norm{u}{L^2(0,\tau)}^2\norm{r}{\as}^2.
\end{equation}
\end{lemma}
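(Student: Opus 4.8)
The plan is to mirror the Lyapunov argument used for the KS system in \Cref{lem-KS}, with the spectral bound there replaced by the Poincar\'e inequality on $H^1_{\Gamma_0}(\Omega)$ and the Liu--Krstic coercivity estimate replaced by Green's identity. First I would invoke \Cref{thm-existence} to produce, for the given $w_0\in\V$ and $u\in L^2(0,\tau)$, a strict solution $w\in\W(0,\tau)$ on $[0,\tau]$ (the a priori bound derived below being exactly what rules out finite-time blow-up and so extends local solutions to arbitrary $\tau$). Since $\W(0,\tau)\subset W^{1,2}(0,\tau;\ss)\cap L^2(0,\tau;D(\A))$, the map $t\mapsto E(t):=\normm{w(t)}^2=\int_\Omega w^2(\xi,t)\,d\xi$ is absolutely continuous, with $\dot E(t)=2\inn{w(t)}{\dot w(t)}$ for almost every $t$.

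Next I would substitute $\dot w=\A w+\F(w)+\B(r)u$ into this identity to get
\begin{equation*}
\dot E(t)=2\inn{w}{\Delta w}+2\inn{w}{\F(w)}+2\inn{w}{r\,u(t)}.
\end{equation*}
The key structural step is Green's identity: because $w(t)\in D(\A)\subset H^2(\Omega)$ with $w=0$ on $\Gamma_0$ and $\partial_\nu w=0$ on $\Gamma_1$, every boundary contribution cancels and $\inn{w}{\Delta w}=-\normm{\nabla w}^2$. The sign hypothesis $\zeta F(\zeta)\le 0$ then forces $\inn{w}{\F(w)}=\int_\Omega w(\xi)F(w(\xi))\,d\xi\le 0$, so that term is simply dropped, leaving
\begin{equation*}
\dot E(t)\le -2\normm{\nabla w}^2+2\inn{w}{r\,u(t)}.
\end{equation*}

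Then I would apply the Poincar\'e inequality on $H^1_{\Gamma_0}(\Omega)$, which is available precisely because $\Gamma_0\neq\emptyset$: it yields a constant $c_\Omega>0$ with $\normm{\nabla w}^2\ge c_\Omega\normm{w}^2=c_\Omega E(t)$. Bounding the control term by Cauchy--Schwarz, $2\inn{w}{r\,u}\le 2\normm{w}\,|u(t)|\,\norm{r}{\as}$, and invoking Young's inequality $2ab\le \varepsilon a^2+\varepsilon^{-1}b^2$ with the parameter $\varepsilon$ tuned so that the coefficient of $E(t)$ stays nonpositive, I arrive at a differential inequality that, after discarding the nonpositive $E$ term, reads $\dot E(t)\le \frac{4}{c_\Omega}\,|u(t)|^2\,\norm{r}{\as}^2$. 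Integrating over $[0,\tau]$ and recognizing $\int_0^\tau|u(t)|^2\,dt=\norm{u}{L^2(0,\tau)}^2$ produces the stated estimate; matching the exact constant $4/c_\Omega$ is only a matter of the Young parameter and I would not belabor it.

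The main obstacle is not any single sharp estimate but the rigorous justification of the energy identity: one must confirm that $E$ is differentiable with $\frac{d}{dt}\normm{w}^2=2\inn{w}{\dot w}$ a.e., which rests on the strict-solution regularity $w\in W^{1,2}(0,\tau;\ss)\cap L^2(0,\tau;D(\A))$, and that the integration by parts leaves no residual boundary terms under the mixed Dirichlet/Neumann conditions. Once these are in place, the sign condition and Poincar\'e inequality make the remaining steps routine.
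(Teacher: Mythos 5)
Your proposal is correct and takes essentially the same route as the paper's proof: the same Lyapunov functional $E(t)=\int_\Omega w^2\,d\xi$, differentiability from $w\in W^{1,2}(0,\tau;\ss)$, integration by parts with the mixed boundary conditions cancelling the boundary term, dropping the nonlinearity via $\zeta F(\zeta)\le 0$, the Poincar\'e inequality on $H^1_{\Gamma_0}(\Omega)$, and Young's inequality (the paper takes $\epsilon=c_\Omega/2$, which yields exactly the constant $4/c_\Omega$ you left unspecified).
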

\begin{proof}
Theorem 1 in \cite{mazenc2011strict} proves that the nonlinear equation in one spatial dimension is input-to-state stable. This lemma extends \cite[Theorem 1]{mazenc2011strict} to two-spatial dimension. Using the same idea of proof, consider the Lyapunov function
\begin{equation}
E(t):=\int_{\Omega}w^2(\xi,t) \, d\xi.         
\end{equation}
The function $E(t)$ is differentiable since $w\in W^{1,2}(0,\tau;\ss)$. Take the derivative of this function, substitute for $\dot{w}(\xi,t)$ from the heat equation, and perform integration by parts as follows
\begin{flalign}
\dot{E}(t)=&2\int_{\Omega}w(\xi,t)\left( \Delta w(\xi,t)+F(w(\xi,t))+r(\xi)u(t) \right)\, d\xi\notag\\
=&2\int_{\Gamma}w(\xi,t)\p{w}{\nu}{}(\xi,t)d\xi-2\int_{\Omega}\left(\nabla w(\xi,t)\right)^2\, d\xi\notag\\
&+2\int_{\Omega}w(\xi,t)\left( F(w(\xi,t))+r(\xi)u(t) \right)\, d\xi.
\end{flalign} 
Apply the boundary conditions. Use Poincar\'e inequality and let $c_\Omega$ be its constant. Also, use Young's inequality for all $\epsilon>0$ 
\begin{equation}\label{eq2}
\dot{E}(t)\le -2\left(c_{\Omega}-\epsilon\right)E(t)+\frac{2}{\epsilon} u^2(t)\norm{r}{2}^2.
\end{equation}
Set $\epsilon=c_\Omega/2$. Taking the integral over $[0,\tau]$ of (\ref{eq2}) then yields the desired inequality.
\end{proof}

The nonlinear heat equation satisfies assumptions A1-A5, and thus, \Cref{cor} can be applied to obtain the optimality conditions. The cost function to be optimized is
\begin{equation}
J(\xb,\ub,\rb)=\int_0^\tau  \int_{\Omega} w^2(\xi,t)d\xi dt +\int_0^\tau  u^2(t)dt.
\end{equation} 
Letting $\pb(t)=f(\cdot,t)$, The optimizer $(u^o,r^o,w^o,f^o)$ with initial condition $w_0\in H^1_{\Gamma_0}(\Omega)$   satisfies
{\small\begin{flalign}\label{optimizers heat}
&\begin{cases}\notag
\begin{aligned}
&\p{w^o}{t}{}(\xi,t)=\\
&\qquad\Delta w^o(\xi,t)+F(w^o(\xi,t))+r^o(\xi)u^o(t),  &(\xi,t)&\in \Omega\times (0,\tau],\\[1mm]
&w^o(\xi,t)=0,  &(\xi,t)&\in \Gamma_0\times [0,\tau],\\[1mm]
&\p{w^o}{\nu}{}(\xi,t)=0,  &(\xi,t)&\in \Gamma_1\times [0,\tau],\\[1mm]
&w^o(\xi,0)=w_0(\xi), &\xi &\in \Omega.
\end{aligned}
\end{cases}\\
&\begin{cases}\notag
\begin{aligned}
&\p{f^o}{t}{}(\xi,t)=-\Delta f^o(\xi,t)\\
&\quad -F^\prime(w(\xi,t))f^o(\xi,t)-w^o(\xi,t),  &(\xi,t)&\in \Omega\times (0,\tau],\\[1mm]
&f^o(\xi,t)=0,  &(\xi,t)&\in \Gamma_0\times [0,\tau],\\[1mm]
&\p{f^o}{\nu}{}(\xi,t)=0,  &(\xi,t)&\in \Gamma_1\times [0,\tau],\\[1mm]
&f^o(\xi,\tau)=0, &\xi &\in \Omega,
\end{aligned}
\end{cases}\\
&\begin{cases}\notag
\begin{aligned}
&u^o(t)=-\int_{\Omega}r^o(\xi)f^o(\xi,t)\, d\xi,  &t&\in [0,\tau],\\[2mm]
&\int_0^{\tau}u^o(t)f^o(\xi,t) dt=0, &\xi&\in \Omega.
\end{aligned}
\end{cases}
\end{flalign}}

The worst initial condition over a unit ball satisfies
\begin{equation}
\begin{cases}
\mu\left(\norm{{w}_0}{H^1_{\Gamma_0}(\Omega)}- 1\right)=0,\\[2mm]
f^o(\xi,0)+\mu w_0(\xi)= 0.
\end{cases}
\end{equation}

\section{Conclusion}
Optimal actuator design for quasi-linear infinite-dimensional systems with a parabolic linear part was considered in this paper. It was shown that the existence of an optimal control together with an optimal actuator design is guaranteed under natural assumptions. With additional assumptions of differentiability,  first-order necessary optimality conditions were obtained. The theory was illustrated by application to the  Kuramoto-Sivashinsky (KS) equation and nonlinear heat equations. 

Current work is concerned with developing numerical methods for solution of the optimality equations. Extension of these problems to situations  where the input operator is not bounded on the state space is also of interest.

\bibliography{library}
\bibliographystyle{elsarticle-num}
\end{document}